\newcommand{\gb}{\beta}
\newcommand{\gd}{\delta}
\newcommand{\gep}{\epsilon}
\newcommand{\gk}{\kappa}
\newcommand{\gl}{\lambda}
\newcommand{\go}{\omega}
\newcommand{\gs}{\sigma}
\newcommand{\gD}{\Delta}
\newcommand{\gO}{\Omega}
\newcommand{\cA}{\mathcal{A}}
\newcommand{\cB}{\mathcal{B}}
\newcommand{\cD}{\mathcal{D}}
\newcommand{\cE}{\mathcal{E}}
\newcommand{\cF}{\mathcal{F}}
\newcommand{\cG}{\mathcal{G}}
\newcommand{\cH}{\mathcal{H}}
\newcommand{\cI}{\mathcal{I}}
\newcommand{\cJ}{\mathcal{J}}
\newcommand{\cL}{\mathcal{L}}
\newcommand{\1}{1}
\newcommand{\C}{\mathbb{C}}
\newcommand{\N}{\mathbb{N}}
\newcommand{\R}{\mathbb{R}}
\newcommand{\p}{\partial}
\newtheorem{theorem}{Theorem}[section]
\newtheorem{lemma}[theorem]{Lemma}
\newtheorem{corollary}[theorem]{Corollary}
\newtheorem{definition}[theorem]{Definition}
\theoremstyle{remark}
\newtheorem{remark}[theorem]{Remark}
\newcommand{\diag}[1]{\text{diag}(#1)}
\newcommand{\erf}{\mathop{\operator@font erf}\nolimits}
\newcommand{\erfc}{\mathop{\operator@font erfc}\nolimits}
\newcommand{\sign}{\mathop{\operator@font sign}\nolimits}
\newenvironment{enum_i}
    {\begin{enumerate}}
    {\end{enumerate}}
\newif\if@golden  \@goldentrue
\newcommand{\f@ctor}{1}
\newlength{\aiv@width}  \setlength{\aiv@width}{210mm}
\newlength{\aiv@height} \setlength{\aiv@height}{297mm}
\newlength{\tmp@width}  \setlength{\tmp@width}{\aiv@width}
\newlength{\tmp@height} \setlength{\tmp@height}{\aiv@height}
\if@golden\setlength{\textwidth}{33pc}
  \else\setlength{\textwidth}{36pc}\fi
\relax\setlength{\textwidth}{29pc}\or
\or\setlength{\textwidth}{33pc}\fi
\relax\setlength{\textwidth}{31pc}\or
\or\setlength{\textwidth}{35pc}\fi\fi
\relax\renewcommand{\f@ctor}{53}
  \or\renewcommand{\f@ctor}{46}\or\renewcommand{\f@ctor}{43}\fi
\relax\renewcommand{\f@ctor}{51}\or
  \renewcommand{\f@ctor}{45}\or\renewcommand{\f@ctor}{42}\fi\fi
\relax \renewcommand{\f@ctor}{46}
  \or\renewcommand{\f@ctor}{43}\or\renewcommand{\f@ctor}{43}\fi
\relax\renewcommand{\f@ctor}{43}
  \or\renewcommand{\f@ctor}{40}\or\renewcommand{\f@ctor}{40}\fi\fi\fi
\let\comp\circ
\newcommand{\op}{\ensuremath^\circ}
\newcommand{\cGo}{\ensuremath\cG\op}
\newcommand{\sgl}{\ensuremath\sqrt{2\gl}}
\newcommand{\da}{\ensuremath\downarrow}
\newcommand{\ua}{\ensuremath\uparrow}
\newcommand{\cond}{\ensuremath\,\big|\,}
\newcommand{\Co}{\ensuremath C_0(\cG)}
\newcommand{\Coo}{\ensuremath C_0^{0,2}(\cG)}
\newcommand{\Cii}{\ensuremath C_0^2(\cG)}
\newcommand{\Coe}{\ensuremath C_0(E)}
\newcommand{\Ra}{$\Rightarrow$\space}
\newcommand{\limep}{\ensuremath\lim_{\gep\da 0}}
\newcommand{\cGD}{\ensuremath \cG^\gD}
\newcommand{\Ieqref}[1]{\textup{\tagform@{I.\ref{I_#1}}}}
\newcommand{\IIeqref}[1]{\textup{\tagform@{II.\ref{II_#1}}}}
\newcommand{\abs}{\ensuremath ^{\text{\itshape abs}}}
\newlength{\BCs@ze}
\newlength{\BCsh@ft}
\DeclareFixedFont\MT{OMS}{cmsy}{m}{n}{\BCs@ze}    % standard 'Computer Modern' symbols
\newcommand{\BigCart}{\ensuremath\mathop{\raisebox{\BCsh@ft}{{\MT\char"02}}}}
\def\pdftitle{\@gobble}
\let\setdif\setminus
\numberwithin{equation}{section}
\date{December 6, 2010}
\title[Brownian Motions on Metric Graphs]{%
Brownian Motions on Metric Graphs I\\
{\small Definition, Feller Property, and Generators}
}
\author[V.~Kostrykin]{Vadim Kostrykin}
\address{Vadim Kostrykin\newline
Institut f\"ur Mathematik\newline
Johannes Gutenberg--Universit\"at\newline
D--55099 Mainz, Germany}
\email{kostrykin@mathematik.uni-mainz.de}
\author[J.~Potthoff]{J\"urgen Potthoff}
\address{J\"urgen Potthoff\newline
Institut f\"ur Mathematik, Universit\"at Mannheim\newline
D--68131 Mann\-heim, Germany}
\email{potthoff@math.uni-mannheim.de}
\author[R.~Schrader]{Robert Schrader}
\address{Robert Schrader\newline
Institut f\"{u}r Theoretische Physik\newline
Freie Universit\"{a}t Berlin, Arnimallee~14\newline
D--14195 Berlin, Germany}
\email{schrader@physik.fu-berlin.de}
\subjclass[2010]{60J65,60J45,60H99,58J65,35K05,05C99}
\keywords{Metric graphs, Brownian motion, Feller processes,
Feller's theorem}
\begin{document}
\begin{abstract}
Brownian motions on a metric graph are defined, their Feller property is proved, and
their generators are characterized. This yields a version of Feller's theorem for
metric graphs. 
\end{abstract}

\maketitle
\thispagestyle{empty}

\section{Introduction} \label{sect_1}

In his pioneering articles~\cite{Fe52, Fe54, Fe54a}, Feller raised the problem of
characterizing and constructing all Brownian motions on a finite or on a
semi-infinite interval. In the sequel this problem stimulated very important
research in the field of stochastic processes, and the problem of constructing
all such Brownian motions found a complete solution~\cite{ItMc63, ItMc74} via the
combination of the theory of the local time of Brownian motion~\cite{Le48}, and the
theory of (strong) Markov processes~\cite{Bl57, Dy61, Dy65a, Dy65b, Hu56}.
\iffalse
In view
of the paper~\cite{ItMc63} it is probably fair to say that the epoch making
book~\cite{ItMc74} by It\^o and McKean had one of its roots in the problem posed by
Feller.
\fi

On the other hand, there is a growing interest in metric graphs, that is, piecewise
linear varieties where the vertices may be viewed as singularities. Metric graphs
arise naturally as models in many domains, such as physics, chemistry, computer science
and engineering to mention just a few --- we refer the interested reader to~\cite{Ku04}
for a review of such models and for further references.

Therefore it is natural to extend Feller's problem to metric graphs. The
present paper is the first in a series of three articles~\cite{BMMG2, BMMG3} (together
with a more pedagogical one~\cite{BMMG0}, in which the well-known classical
cases of finite and semi-infinite intervals are revisited) on the characterization
and the construction of all Brownian motions on metric graphs. Stochastic processes,
in particular Brownian motions and diffusions, on locally one-dimensional
structures, notably on graphs and networks, have already been studied in a number of
articles, out of which we want to mention~\cite{BaCh84, DeJa93, EiKa96, FrSh00,
FrWe93, Fr94, Gr99, Kr95} in this context.

Heuristically, a metric graph $(\cG,d)$ can be thought of as the union of a
collection of finite or semi-infinite closed intervals which are glued together at
some of their endpoints which form the vertices of the graph, while the intervals
are its edges. The metric $d$ is then defined in the canonical way as the length of
a shortest path between two points along the edges, and the length along the edges
is measured as for usual intervals. For a more formal definition of metric graphs
see section~\ref{ssect_2_1}. We will only consider \emph{finite} graphs, that is,
those for which the sets of vertices and edges are finite. For the definition of a
Brownian motion on the metric
graph $(\cG,d)$ we take a standpoint similar to the one of Knight~\cite{Kn81} for
the semi-line or a finite interval: It is a strong Markov process with c\`adl\`ag
paths which are continuous up to the lifetime, and which up to the first passage time
at a vertex is a standard Brownian motion on the edge where it started. For the
formal definition, cf.\ section~\ref{sect_3}.

The crucial problem is then to characterize the behaviour of the stochastic process
when it reaches one of the vertices of the graph $\cG$, or in other words, the
characterization of the boundary conditions at the vertices of the Laplace operator
which generates the stochastic process. We want to mention in passing that in an
$L^2$-setting all boundary conditions for Laplace operators on $\cG$ which make them
self-adjoint operators have been characterized in~\cite{KoSc99, KoSc00}. In this
series of papers we shall work with the Banach space of continuous functions on
$\cG$ which vanish at infinity. The main result of the present paper is Feller's
theorem for metric graphs (cf.\ theorem~\ref{thm_5_3}). Roughly speaking it
states that all possible boundary conditions are \emph{local} boundary conditions of
\emph{Wentzell} type, i.e., linear combinations of the value of the function with
its first and second (directional) derivatives at each vertex, subject to certain
conditions on the coefficients. We want to emphasize here, that the fact that we
only obtain local boundary conditions is due to the assumption that the paths of the
Brownian motion have no jumps during their lifetime. On a more technical level this
assumption entails that we deal with Feller processes --- as is proved in
section~\ref{sect_4} --- which is of considerable advantage when we prove the strong
Markov property of the processes which we will construct in the follow-up
papers~\cite{BMMG2, BMMG3}.  On the other hand, in a forthcoming work we shall embed
the situation into the larger framework of Ray processes, and there we shall deal
also with non-local boundary conditions, allowing the processes to have jumps from the
vertices into $\cG$ in addition to the jumps to the cemetery point.

The paper is organized in the following way. In section~\ref{sect_2} we recall the
pertinent notions of (finite) metric graphs, and of strong Markov processes on metric graphs,
at the same time setting up our notation. In section~\ref{sect_3} Brownian motions
on metric graphs are defined, and some consequences of this definition are
discussed. The proof of the statement that Brownian motions on metric graphs are
Feller processes is given in section~\ref{sect_4}. Finally, we state and prove
Feller's theorem for metric graphs in section~\ref{sect_5}. In appendix~\ref{app_A}
we give a short account of Feller semigroups in a form which we find especially
convenient for the purposes of the present paper, but which we could not find in
this form elsewhere.

The contents of the other two papers in this series are as follows. In the
article~\cite{BMMG2} all Brownian motions on single vertex graphs (roughly speaking,
$n$ semi-lines $[0,+\infty)$ glued together at the origin) are constructed, and in
the article~\cite{BMMG3} these Brownian motions are pieced together pathwise to yield all
possible Brownian motions on a general metric graph.

\vspace{1.5\baselineskip}
\noindent
\textbf{Acknowledgement.}
The authors thank Mrs.~and Mr.~Hulbert for their warm hospitality at the
\textsc{Egertsm\"uhle}, Kiedrich, where part of this work was done.
J.P.\ gratefully acknowledges fruitful discussions with O.~Falkenburg,
A.~Lang and F.~Werner. R.S.~thanks the organizers of the \emph{Chinese--German
Meeting on Stochastic Analysis and Related Fields}, Beijing, May 2010,
where some of the material of this article was presented.

\section{Preliminary Definitions and Notations}  \label{sect_2}
\subsection{Metric Graphs} \label{ssect_2_1}
Throughout this paper we consider a fixed finite \emph{metric graph} $(\cG,d)$.
That is, $\cG$ is a quadruple $(V,\cI,\cE,\p)$, where $V$ is a finite set of
\emph{vertices}, $\cI$ is a finite set of \emph{internal edges}, $\cE$ is a finite
set of \emph{external edges}, and $\p$ is a map from the set $\cL=\cI\cup\cE$ of
\emph{edges} into $(V\times V)\cup V$, which maps an internal edge $i\in\cI$ to an
ordered pair $(\p^-(i),\p^+(i))\in V\times V$ of vertices, called the
\emph{initial} and \emph{final vertex of $i$}, while $e\in\cE$ is
mapped to $\p(e)\in V$, called the \emph{initial vertex of $e$}.  Every edge
$l\in\cL$ is assumed to be isometrically isomorphic to an interval $I_l$, namely
every $e\in\cE$ is in one-to-one correspondence with the half line $[0,+\infty)$,
while for every $i\in\cI$ there exists $\rho_i>0$ so that $i$ is isomorphic to
$[0,\rho_i]$. Under these isomorphisms, for $i\in\cI$ we have that $\p_-(i)$
corresponds to $0$ while $\p^+(i)$ corresponds to $\rho_i$, and for $e\in\cE$, the
vertex $\p(e)$ corresponds to $0$. $\rho_i$ is called the \emph{length}
of the internal edge $i\in\cI$. Moreover, we suppose that the edges are sets with an
ordering as induced by the isomorphisms mentioned above.
$\cL(v)=\{l\in\cL,\,v\in\partial(l)\}$ is the set of edges incident with $v$.
For notational simplicity we will also use $\p(l)$, $l\in\cL$, to denote the
set consisting of $\p^-(l)$ and $\p^+(l)$ if $l\in \cI$, and of $\p(l)$ if
$l\in\cE$.

In~\cite{KoSc06} the standard notion of a \emph{walk} on a graph (e.g., \cite{Ju05})
has been generalized to graphs of the above type, and therefore we obtain in a
natural way a metric $d$ on $\cG$ as the minimal length of all walks leading from one
point to another, where the length is measured along the edges as
induced by the isometry with the corresponding intervals.

In the sequel it will be convenient --- and without danger of confusion --- to
identify the abstract graph $\cG$ with its isomorphic \emph{geometric graph} (e.g.,
\cite{Ju05}). In other words, we also consider $\cG$ as a union of the intervals corresponding
to the edges, subject to the equivalence relation defined by the combinatorial
structure of the graph which identifies those endpoints of the intervals which
correspond to vertices to which the respective edges are incident. Similarly,
we shall often identify the edges with the intervals they are isomorphic to.

Clearly, $(\cG,d)$ is a complete, separable metric space, and hence it is a Polish
space. The Borel $\gs$--algebra of $\cG$ is denoted by $\cB(\cG)$. We write $B_r(\xi)$
for the open ball with radius $r>0$ and center $\xi\in\cG$.

For $l\in\cL$, $l\op$ denotes the open interior of $l$, i.e., $l\op$ is the subset
of $l$ being isomorphic to $(0,\rho_i)$ if $l=i\in\cI$, and to $(0,+\infty)$ if
$l\in\cE$. We set $\cGo = \cG\setdif V$ to be the interior of $\cG$, and hence $\cG\op$
is the pairwise disjoint union of the open edges $l\op$, $l\in\cL$. In particular, every
$\xi\in \cGo$ is in one-to-one correspondence with its \emph{local coordinate}
$(l,x)$, $l\in\cL$, $x\in I_l\op$, and we may and will write $\xi=(l,x)$.

Assume that $f$ is a real valued function on $\cG$. Then $f$ is in one-to-one
correspondence with the family of functions $(f_l,\,l\in\cL)$ where $f_l$ is the
restriction of $f$ to the edge $l\in\cL$. (Of course, if $v\in V$ is a vertex with
which the edges $l$, $l'\in\cL$ are incident, then we have to have
$f_l(v)=f_{l'}(v)$.) Sometimes it will also be convenient to write $f_l(x)$ instead
of $f(\xi)$, for $\xi\in\cGo$ having local coordinates $(l,x)$.

The space of real valued, bounded measurable functions will be denoted by $B(\cG)$,
while $C_0(\cG)$ denotes the space of continuous real valued functions on $\cG$ which
vanish at infinity. Both spaces are equipped with the sup-norm, denoted by
\mbox{$\|\cdot\|$}. \mbox{$(B(\cG),\|\cdot\|)$} and \mbox{$(C_0(\cG),\|\cdot\|)$} are
Banach spaces, the latter being separable.

\subsection{Markov Processes on Metric Graphs}  \label{ssect_2_2}
Let $(\cG,d)$ be a metric graph as in the previous subsection. Furthermore let
$(\gO,\cA)$ be a measurable space, equipped with a family $(P_\xi,\, \xi\in\cG)$ of
probability measures. The expectation with respect to $P_\xi$, $\xi\in\cG$, will be
denoted by $E_\xi(\,\cdot\,)$. We will say that a statement holds \emph{almost
surely} (a.s.), if for all $\xi\in\cG$ the statement holds almost surely with
respect to $P_\xi$.

Let $\gD$ be a point not in $\cG$ which we will view as a cemetery point. By $\cGD$
we denote the union $\cG\cup\{\gD\}$, where $\gD$ is adjoined to $\cG$ as an isolated
point. We define the $\gs$--algebra $\cB(\cGD)$ on $\cGD$ as the obvious minimal extension
of $\cB(\cG)$. All real valued functions $f$ on $\cG$ are understood as being extended
to $\cGD$ with $f(\gD)=0$.

We consider a $\cGD$--valued normal homogeneous Markov process $X = (X_t,\,t\ge 0)$
on $(\gO,\cA)$ relative to a filtration $\cF=(\cF_t,\,t\ge 0)$ in $\cA$ with
c\`adl\`ag paths. Throughout, we will suppose that the filtration $\cF$ is right
continuous and complete for the family $(P_\xi,\, \xi\in\cG)$, that is, for all
$t\ge 0$, $\cF_t=\cap_{\gep>0}\cF_{t+\gep}$, and $\cF_0$ contains all subsets of
$\gO$ which are negligible for all $P_\xi$, $\xi\in \cG$. Also $\gD$ is a cemetery
state for $X$, i.e., almost surely $X_s = \gD$, $s\ge 0$, entails $X_t = \gD$ for all
$t\ge s$. The lifetime $\zeta$ of $X$ is defined by $\zeta = \inf\{t\ge0,\,X_t =
\gD\}$.

As in~\cite{ReYo91} we assume that the transition probabilities of $X$ are
given in terms of a transition function $P = (P_t,\,t\ge 0)$, i.e.,
\begin{equation*}
    P_\xi(X_t\in C) = P_t(\xi,C),\qquad \xi\in\cG,\,t\ge 0,\,C\in\cB(\cGD).
\end{equation*}
In particular, for all $t\ge 0$, $C\in\cB(\cGD)$, the mapping $\xi\mapsto
P_\xi(X_t\in C)$ is measurable. In terms of the transition function $P$ the Markov
property of $X$ can be written as follows:
\begin{equation*}
    P_\xi(X_t\in C\cond \cF_s)
        = P_{t-s}(\xi,C), \qquad 0\le s\le t,\,C\in\cB(\cGD),\,\xi\in\cG.
\end{equation*}

It will be convenient and there is no loss of generality to assume the existence of a
shift operator $\theta: \R_+\times \gO\rightarrow \gO$, $(s,\go)\mapsto
\theta_s(\go)$, so that a.s.\ for all $t$, $s\ge 0$,
\begin{equation}    \label{eq_2_1}
    X_t \comp \theta_s = X_{t+s}.
\end{equation}

Thus if $X$ is a strong Markov process with respect to $\cF$, its strong Markov
property can be expressed in the following way. Let $S$ be an $\cF$--stopping time,
and as is usual denote $\gs$--algebra of the past of $S$ by $\cF_S$. If $\xi\in\cG$,
and $Z$ is a positive or bounded random variable, then
\begin{equation}    \label{eq_2_2}
    E_\xi\bigl(Z\comp \theta_S\cond \cF_S\bigr) = E_{X_S}(Z),
\end{equation}
holds $P_\xi$--a.s.\ on the set $\{X_S\ne \gD\} = \{S<\zeta\}$.

For a subset $A$ of $\cG$ we shall denote its hitting time by $X$ by $H_A$,
\begin{equation*}
    H_A = \inf\{t>0,\,X_t\in A\}
\end{equation*}
and if $A=\{\xi\}$, $\xi\in\cG$, we simply write $H_\xi$.

We shall occasionally take the liberty to write $X(t)$ for $X_t$, $t\in\R_+$, or
$H(A)$ for $H_A$, $A\subset \cG$, whenever it is typographically more convenient.

The semigroup $U=(U_t,\,t\ge 0)$ associated with $X$ and acting linearly on $B(\cG)$
is defined by
\begin{equation}    \label{eq_2_3}
    U_t f(\xi) = E_\xi\bigl(f(X_t)\bigr) = \int_\cG f(\eta)\,P_t(\xi,d\eta),
\end{equation}
for $f\in B(\cG)$, $\xi\in\cG$. The bound $\|U_t f\|\le \|f\|$ holds for all $f\in
B(\cG)$, $t\ge 0$. The resolvent $R=(R_\gl,\,\gl>0)$ associated with $X$, and acting on
$B(\cG)$, is defined by
\begin{equation}    \label{eq_2_4}
    R_\gl f(\xi) = \int_0^\infty e^{-\gl t} U_t f(\xi)\,dt,\qquad \gl>0,
\end{equation}
and satisfies
\begin{equation}    \label{eq_2_5}
    \bigl\|R_\gl f\bigr\| \le \frac{1}{\gl}\,\|f\|.
\end{equation}
We shall denote the restrictions of the semigroup $U$ and the resolvent $R$ to
$\Co$ by the same symbols.

Assume that $X$ is a strong Markov process with respect to the filtration $\cF$. A
direct consequence of the strong Markov property is the \emph{first passage time
formula} for the resolvent (e.g., \cite{Ra56} or \cite{ItMc74}): Let $\xi\in\cG$, $f\in
B(\cG)$, $\gl>0$, and assume that $S$ is a $\cF$--stopping time which is $P_\xi$--a.s.\
finite. Then
\begin{equation} \label{eq_2_6}
    R_\gl f(\xi) = E_\xi\Bigl(\int_0^S e^{-\gl t} f(X_t)\,dt\Bigr)
                    + E_\xi\bigl(e^{-\gl S}\,R_\gl f(X_S)\bigr)
\end{equation}
holds true.

We shall often have occasion to use a standard Brownian family on the real line $\R$
as a family of reference processes: Let $(\gO',\cA')$ denote another measurable
space with a family $(Q_x,\,x\in\R)$ of probability measures, and for every
$x\in\R$, $(B_t,\,t\in\R_+)$ a standard Brownian motion on $\R$ starting
$Q_x$--a.s.\ in $x$. Expectations with respect to $Q_x$ will be denoted by
$E^Q_x(\,\cdot\,)$. The Brownian family is equipped with a filtration denoted by
$\cJ=(\cJ_t,\,t\ge 0)$, and throughout we assume --- as we may --- that $\cJ$ is
right continuous and complete for the family $(Q_x,\,x\in\R)$. (For example, we can
always consider the natural filtration generated by $(B_t,\,t\in\R_+)$, and then
choose its universal augmentation, e.g., \cite[Chapter~III.2]{ReYo91} or
\cite[Chapter~2.7]{KaSh91}.) $H_A^B$ denotes the hitting time of the set
$A\subset\R$ by $B$, and as above we simply write $H^B_x$ for $H^B_{\{x\}}$, and
occasionally $B(t)$ for $B_t$, $H^B(A)$ for $H^B_A$, $A\subset\R$.

Consider an edge $l\in\cL$, and the interval $I_l\subset \R_+$ that $l$ is
isomorphic to. We set $\p(I_l)=\{0,\rho_l\}$ if $l\in\cI$, and $\p(I_l)=\{0\}$
if $l\in\cE$. For $t\in\R_+$ set
\begin{equation*}
    B_t^l =   B\bigl(t\land H^B_{\p(I_l)}\bigr)
\end{equation*}
where $s\land t = \min\{s,t\}$, $s$, $t\in\R_+$, i.e., $B^l$ is a Brownian motion on
$\R$ with absorption in the endpoint(s) of $I_l$. Under the family $(Q_x,\,x\in
I_l)$ we call this process the \emph{absorbed Brownian motion on $I_l$}.

\section{Definition of Brownian Motions on a Metric Graph}   \label{sect_3}
In analogy with~\cite[Chapter~6]{Kn81} we define a Brownian motion on a metric graph
$\cG$ as follows.

\begin{definition}  \label{def_3_1}
A \emph{Brownian motion on $\cG$} is a normal strong Markov process $X=(X_t,\,t\ge
0)$ with state space $\cGD$ and lifetime $\zeta$. The sample paths of $X$ are right
continuous with left limits in $\cG$, and they are continuous on $[0,\zeta)$.
Furthermore, for every $\xi\in\cGo$ with local coordinates $(l,x)$, $l\in\cL$, $x\in
I_l\op$, the process $X\abs = \bigl(X(t\land H_V),\,t\ge 0\bigr)$, with start in
$\xi$ and absorption in the set of vertices $V$, is equivalent to an absorbed Brownian
motion on $I_l$ with start in $x$.
\end{definition}

\begin{remark}  \label{rem_3_2}
According to our convention of subsection~\ref{ssect_2_2}, we consider a Brownian
motion $X$ on $\cG$ as a strong Markov process with respect to a filtration
$\cF=(\cF_t,\,t\ge 0)$ which is right continuous and complete.
\end{remark}

\begin{remark}	\label{rem_3_2a}
Suppose that $\xi\in l$, $l\in\cL$, then $H_V = H_{\p(l)}$, because
paths starting at $\xi$ hit the set $\p(l)$ before any other
vertex due to the continuity assumption.
\end{remark}

Consider a Brownian motion $X$ on $\cG$. Let us state the last condition in
definition~{\ref{def_3_1}} more explicitly. Fix $l\in\cL$, and $\xi\in l\op$ with
local coordinates $(l,x)$. Then for all $n\in\N$, $t_1$, \dots, $t_n\in\R_+$, with
$t_1\le t_2\le\dotsb\le t_n$, and all $A_1$, \dots, $A_n$ in the Borel
$\gs$--algebra $\cB(l)$ of~$l$,
\begin{equation}    \label{eq_3 1}
\begin{split}
    P_\xi\bigl(X_{t_1}\in A_1,&\dotsc,X_{t_n}\in A_n, t_n\le H_V\bigr)\\
        &= Q_x\bigl(B_{t_1}\in A_1,
                \dotsc,B_{t_n}\in A_n, t_n\le H^B_{\p(I_l)}\bigr).
\end{split}
\end{equation}
For simplicity we have identified the set $A_i \subset l$ with its isomorphic image
in $I_l$. Observe that in particular under $P_\xi$, the
stopping time $H_V = H_{\p(l)}$ has the same law as $H^B_{\p(I_l)}$ under $Q_x$, and
especially we get that for all $\xi\in\cG$, $P_\xi(H_V<+\infty)=1$.

It follows from definition~\ref{def_3_1} that any discontinuity of the paths of $X$
can only occur at the vertices of $\cG$, and it consists in a jump to the cemetery
state $\gD$. Hence if the process starts in $\xi\in\cGo$, it cannot reach the
cemetery state $\gD$ before hitting $V$. On the other hand, if the process starts in
$v\in V$, $P_v$--a.s.\ it cannot jump right away to $\gD$, because this would
contradict the right continuity of the paths and the requirement $P_\xi(X_0=\xi)=1$
for all $\xi\in \cG$. In particular, we have for all $\xi\in\cG$, $P_\xi(\zeta\ge
H_V)=1$.

For the following discussion we assume that the process $X$ starts at a vertex $v\in
V$, and consider the exit time from $v$, i.e., the stopping time $S_v =
H(\cGo)$. It is well known (e.g., \cite{Kn81, ReYo91, DyJu69}) that because of the
strong Markov property of $X$, $S_v$ is under $P_v$ exponentially distributed with a
rate $\gb_v\in[0,+\infty]$. Thus there are three possibilities:

\vspace{.5\baselineskip}\noindent
\emph{Case $\gb_v=0$}: In this case the process stays at $v$ forever, i.e., $v$ is a
\emph{trap}, and the process is given by $(X(t\land H_v),\,t\in\R_+)$.

\vspace{.5\baselineskip}\noindent
\emph{Case $0<\gb_v<+\infty$}: In this case the process stays at $v$ $P_v$--a.s.\ for
a strictly positive, finite moment of time, i.e., $v$ is \emph{exponentially
holding}. It is well known (cf., e.g., \cite[p.~154]{Kn81},
\cite[p.~104, Prop.~3.13]{ReYo91}) that then
the process has to leave $v$ by a jump, and by our assumption of path continuity on
$[0,\eta)$, the process has to jump to the cemetery $\gD$.

\vspace{.5\baselineskip}\noindent
\emph{Case $\gb_v=+\infty$}: In this case the $X$ leaves the vertex $v$ immediately,
and it begins a Brownian excursion into one of the edges incident with the vertex
$v$.

\section{Feller Property}    \label{sect_4}
In this section we prove that the semigroup $U$ associated with every Brownian
motion on $\cG$ has the Feller property (see, e.g., \cite{Kn81, ReYo91}, or
definition~\ref{def_A_1} in appendix~\ref{app_A}).

We begin with the following simple lemma.

\begin{lemma}   \label{lem_4_1}
Assume that $\xi\in\cGo$. Then $P_\xi$--a.s.\ $H_\eta$ converges to zero, whenever
$\eta$ increases or decreases to $\xi$ along the edge to which $\xi$ belongs.
\end{lemma}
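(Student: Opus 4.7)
The plan is to transfer the claim to the reference Brownian motion on $\R$ via the distributional identification in definition~\ref{def_3_1}, where it becomes a classical consequence of Blumenthal's $0$-$1$ law.

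Write $\xi=(l,x)$ with $x\in I_l\op$. I focus on the case $\eta\da\xi$ along the edge, the case $\eta\ua\xi$ being symmetric. First I would fix a sequence $y_n\da x$ with all $y_n$ in $I_l\op$ on the same side of $x$, and set $\eta_n=(l,y_n)$. By the continuity of paths on $[0,\zeta)$ and the fact that $\xi\in l\op$, any path that reaches $\eta_{n-1}$ starting from $\xi$ must first pass through $\eta_n$; hence $n\mapsto H_{\eta_n}$ is $P_\xi$--a.s.\ non-increasing in $n$. Moreover, since each $\eta_n\in l\op$, remark~\ref{rem_3_2a} gives $H_{\eta_n}\le H_V$ $P_\xi$--a.s., so each $H_{\eta_n}$ is a measurable functional of the absorbed path $X\abs$.

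Next, by definition~\ref{def_3_1} applied to the absorbed path, the joint law of $(H_{\eta_n})_{n\in\N}$ under $P_\xi$ coincides with the joint law of $(H^B_{y_n})_{n\in\N}$ under $Q_x$; this is a consequence of the finite-dimensional equivalence formalized in equation~(3.1), since the $H_{\eta_n}$ are measurable functionals of the c\`adl\`ag path up to $H_V$. For standard Brownian motion starting at $x\in\R$, Blumenthal's $0$-$1$ law yields $Q_x\bigl(\sup_{s\le\gep}B_s>x\bigr)=1$ for every $\gep>0$, and by path continuity this forces $H^B_{y_n}\da 0$ $Q_x$--a.s. Transferring through the equality in law, $H_{\eta_n}\da 0$ $P_\xi$--a.s.

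Finally, to pass from the countable sequence to the continuous one-sided limit, I would use the pathwise monotonicity of $\eta\mapsto H_\eta$ once more: for any $\eta=(l,y)$ with $y\in(y_{n+1},y_n)$, the same continuity argument gives $H_{\eta_{n+1}}\le H_\eta\le H_{\eta_n}$ $P_\xi$--a.s., so $H_\eta\to 0$ as $\eta\da\xi$. The main obstacle, which I would address with the measurability remark above, is the transfer step: one must verify that the identification in definition~\ref{def_3_1} is strong enough to carry almost-sure convergence of the hitting times $H_{\eta_n}$, and this is exactly where the measurability of these hitting times as functionals of the absorbed c\`adl\`ag path on $[0,H_V]$ is essential.
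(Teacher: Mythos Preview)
Your overall strategy---transfer to the reference Brownian motion and invoke Blumenthal's $0$--$1$ law---is sound and genuinely different from the paper's explicit computation, but the execution has a real gap. The claim that ``remark~\ref{rem_3_2a} gives $H_{\eta_n}\le H_V$ $P_\xi$--a.s.'' is false: that remark only asserts $H_V=H_{\partial(l)}$, i.e., the \emph{first vertex} reached lies on the current edge; it says nothing about whether $\eta_n$ is visited beforehand. Concretely, take $l\cong[0,1]$, $x=1/2$, $y_1=0.9$. With probability $(y_1-x)/y_1=4/9$ the process reaches the vertex at $0$ before $(l,y_1)$; thereafter it may move onto another edge or be killed, so $H_{\eta_1}>H_V$ (possibly $H_{\eta_1}=+\infty$) with positive $P_\xi$--probability. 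Hence $H_{\eta_n}$ is \emph{not} a functional of $X\abs$, and the asserted equality in law between $(H_{\eta_n})_n$ under $P_\xi$ and $(H^B_{y_n})_n$ under $Q_x$ fails.

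The repair is to work instead with the hitting times $\tau_n=\inf\{t>0:\,X\abs_t=\eta_n\}\in[0,+\infty]$ of the absorbed process, which \emph{do} transfer to the absorbed Brownian motion $B^l$. Blumenthal gives $H^B_{y_n}\downarrow 0$ $Q_x$--a.s., and since $H^B_{\partial I_l}>0$ $Q_x$--a.s., eventually $H^B_{y_n}<H^B_{\partial I_l}$, so the absorbed hitting times of $y_n$ also tend to $0$; transferring, $\tau_n\to 0$ $P_\xi$--a.s., and $H_{\eta_n}\le\tau_n$ on $\{\tau_n<\infty\}$ finishes it. The paper sidesteps the whole issue: after the same monotonicity reduction to convergence in probability, it bounds $P_\xi(H_\eta>\gd)$ by splitting on whether the process first exits the two-sided interval $(l,[x-\gep,y])$ through $\eta$ or through $(l,x-\gep)$. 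Both events occur before $H_V$, so the identification with standard Brownian motion applies cleanly, and the two pieces are controlled respectively by the explicit hitting-time density and the gambler's-ruin probability $(y-x)/(y-x+\gep)$.
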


\begin{proof}
First we notice that because up to time $\zeta$ the paths of $X$ are continuous,
$\eta\mapsto H_\eta$ is $P_\xi$--a.s.\ monotone decreasing as $\eta$ increases or
decreases to $\xi$. Therefore it is enough to show that $H_\eta$ converges to zero
in $P_\xi$--probability.

Let $\xi\in l\op$, $l\in\cL$, with local coordinates $(l,x)$, $x\in I_l\op$. Fix
$\gep>0$ small enough so that $(l,x\pm \gep)\in l\op$. Without loss of generality
we may assume that $d(\xi,\eta)<\gep$. We consider first the case where $\eta\da \xi$,
i.e., for the local coordinates $(l,y)$, $y\in I_l$, of $\eta$ we have $y\da x$.
Let $\gd>0$, and write
\begin{equation}    \label{eq_4_1}
    P_\xi(H_\eta>\gd)
        = P_\xi\bigl(H_\eta >\gd, H_{(l,x-\gep)}\ge H_\eta\bigr)
            + P_\xi\bigl(H_\eta >\gd, H_{(l,x-\gep)}< H_\eta\bigr).
\end{equation}
We estimate the second probability on the right hand side from above by
\begin{equation*}
P_\xi\bigl(H_{(l,x-\gep)}< H_\eta\bigr).
\end{equation*}
But this is the probability of the event that the process leaves the set on $l$
which in local coordinates is the interval $(l, [x-\gep,y])$ at the end point with
local coordinates $(l,x-\gep)$. Therefore this is an event which happens before the
process hits a vertex, and hence this probability is equal to the corresponding one
for a standard Brownian motion (e.g., \cite[Problem~6, p.~29]{ItMc74}):
\begin{equation*}
    P_\xi\bigl(H_{(l,x-\gep)}< H_\eta\bigl) = \frac{y-x}{y-x+\gep},
\end{equation*}
which converges to zero as $\eta\da \xi$. Similarly, the first probability on the right
hand side of equation~\eqref{eq_4_1} is equal to
\begin{align*}
    Q_x\bigl(H^B_y>\gd, H^B_{x-\gep}\ge H^B_y\bigr)
        &\le Q_x\bigl(H^B_y\ge \gd\bigr)\\[1ex]
        &= \int_\gd^\infty \frac{y-x}{\sqrt{2\pi t^3}}\,e^{-(y-x)^2/2t}\,dt\\[1ex]
        &= \erf\Bigl(\frac{y-x}{\sqrt{2\gd}}\Bigr),
\end{align*}
where we used the well-known density of $H^B_y$ under $Q_x$, e.g.,
\cite[p.~292]{Sc15}, \cite[section~1.7]{ItMc74}, or
\cite[Proposition~2.6.19]{KaSh91}. Clearly, the last expression converges to zero as
$y\da x$, i.e., as $\eta\da \xi$.

The case $\eta\ua \xi$ is treated in an analogous way.
\end{proof}

\begin{lemma}   \label{lem_4_2}
Let $\gl>0$, $v\in V$, and suppose that $l\in \cL(v)$. Then
\begin{equation}    \label{eq_4_2}
    \lim_{\eta\to v,\, \eta\in l} E_\eta\bigl(e^{-\gl H_v}\bigr) = 1.
\end{equation}
\end{lemma}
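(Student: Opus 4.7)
The plan is to reduce the statement to showing that $H_v\to 0$ in $P_\eta$--probability as $\eta\to v$ along $l$. Once this is established, since $0\le e^{-\gl H_v}\le 1$, the bounded convergence theorem immediately yields $E_\eta(e^{-\gl H_v})\to 1$, which is \eqref{eq_4_2}. So the whole task is to prove the probability convergence.

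First I would fix $\gd>0$ and exploit the fact that $v\in\p(l)$ (because $l\in\cL(v)$), so $H_V\le H_v$ always, and moreover $H_v=H_V$ on the event $\{X_{H_V}=v\}$. This gives the decomposition
\begin{equation*}
    P_\eta(H_v>\gd)
        \le P_\eta(H_V>\gd) + P_\eta(X_{H_V}\ne v).
\end{equation*}
Both summands will be controlled by reduction to standard Brownian computations, exactly in the spirit of Lemma~\ref{lem_4_1}.

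For the first summand, I would use Definition~\ref{def_3_1} together with Remark~\ref{rem_3_2a}: for $\eta\in l\op$ with local coordinate $y$, the process $(X_{t\land H_V})$ under $P_\eta$ is equivalent to absorbed Brownian motion on $I_l$ started at $y$, so $H_V$ has the $P_\eta$--law of $H^B_{\p(I_l)}$ under $Q_y$. Denoting by $y_v$ the local coordinate of $v$, an application of the explicit density of the one-sided hitting time already used in the proof of Lemma~\ref{lem_4_1} gives
\begin{equation*}
    P_\eta(H_V>\gd) \le Q_y\bigl(H^B_{y_v}>\gd\bigr)
        = \erf\Bigl(\frac{|y-y_v|}{\sqrt{2\gd}}\Bigr),
\end{equation*}
which tends to $0$ as $\eta\to v$ in $l$.

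For the second summand, the event $\{X_{H_V}\ne v\}$ is empty if $l\in\cE$, since then $\p(l)=\{v\}$. If $l\in\cI$ with other endpoint $v'$ corresponding to the interval endpoint $y_{v'}$, then the same reduction to absorbed Brownian motion on $I_l$ gives
\begin{equation*}
    P_\eta(X_{H_V}\ne v) = Q_y\bigl(H^B_{y_{v'}}<H^B_{y_v}\bigr)
        = \frac{|y-y_v|}{\rho_l},
\end{equation*}
by the gambler's-ruin computation invoked in the proof of Lemma~\ref{lem_4_1}, and this also tends to $0$. Combining the two estimates completes the argument. There is no real obstacle here: the only point requiring attention is handling the internal/external dichotomy and verifying that the absorbed Brownian motion identification from Definition~\ref{def_3_1} is applicable, since $\eta\in l\op$ (which is automatic for $\eta$ sufficiently close to $v$ but distinct from it).
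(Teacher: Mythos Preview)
Your argument is correct and relies on the same Brownian-motion computations as the paper (the gambler's-ruin probability and the hitting-time density already used in Lemma~\ref{lem_4_1}). The structure differs slightly: the paper splits $1-E_\eta(e^{-\gl H_v})$ directly according to whether $H_v$ occurs before or after the exit time $H_{v,\gep}$ from a small ball $B_\gep(v)$, and bounds each piece; you instead first reduce to $P_\eta(H_v>\gd)\to 0$ and then split according to $\{H_V>\gd\}$ versus $\{X_{H_V}\ne v\}$. Both routes are natural and of the same length; yours has the minor advantage of isolating the probabilistic statement ($H_v\to 0$ in probability) from the passage to the Laplace transform.

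One small wording issue: invoking the ``bounded convergence theorem'' is not quite accurate, since the underlying measures $P_\eta$ vary with $\eta$. What you actually need is the elementary inequality
\begin{equation*}
    1 - E_\eta\bigl(e^{-\gl H_v}\bigr)
        \le \bigl(1-e^{-\gl\gd}\bigr) + P_\eta(H_v>\gd),
\end{equation*}
valid for every $\gd>0$; letting first $\eta\to v$ and then $\gd\downarrow 0$ gives the conclusion. This is trivial, but it is worth stating in place of the DCT reference.
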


\begin{proof}
Fix $\gep>0$ in such a way that we have for every $v'\in V$, $v'\ne v$,
$d(v,v')>\gep$. We may assume that $d(v,\eta)<\gep$. Set
\begin{equation}    \label{eq_4_3}
    H_{v,\gep} = H\bigl(B_\gep(v)^c\bigr),
\end{equation}
where the superscript $c$ denotes the complement of a set. Write
\begin{equation}    \label{eq_4_4}
   1-E_\eta\bigl(e^{-\gl H_v}\bigr)
        = E_\eta\bigl(1-e^{-\gl H_v}; H_v \le H_{v,\gep}\bigr)
            + E_\eta\bigl(1-e^{-\gl H_v}; H_v > H_{v,\gep}\bigr),
\end{equation}
with the notation
\begin{equation}
    E_\eta(Z;C) = E_\eta(Z\,1_C)
\end{equation}
for positive or $P_\eta$--integrable random variables $Z$, and $C\in\cA$. Consider the
case where the vertex $v$ corresponds to the point with local coordinates $(l,0)$, the
case where $v$ corresponds to $(l,\rho_l)$ can be dealt with by an analogous argument.
Let $\eta$ have local coordinates $(l,y)$, $0\le y<\gep$. The second term on the right
hand side of equation~\eqref{eq_4_4} is less or equal to
\begin{align*}
    P_\eta(H_v>H_{v,\gep})
        &= Q_y(H_0^B > H_\gep^B)\\
        &= \frac{y}{\gep},
\end{align*}
which converges to zero with $y\da 0$, i.e., with $\eta\to v$. On the other hand
\begin{align*}
    E_\eta\bigl(1-e^{-\gl H_v}; H_v \le H_{v,\gep}\bigr)
        &=      E^Q_y\bigl(1-e^{-\gl H_0^B}; H_0^B \le H_\gep^B\bigr)\\
        &\le    E^Q_y\bigl(1-e^{-\gl H_0^B}\bigr)\\
        &=      1-e^{-\sgl y},
\end{align*}
where we used the well-known formula for the Laplace transform of the density of
$H_0^B$ under $Q_y$ (e.g., \cite[p.~26, eq.~5]{ItMc74}). Obviously this converges to
zero as $y\da 0$, i.e., as $\eta\to v$.
\end{proof}

\begin{theorem} \label{thm_4_3}
Every Brownian motion on $\cG$ is a Feller process.
\end{theorem}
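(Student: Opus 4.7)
The plan is to verify that $(U_t)$ is a Feller semigroup in the sense of appendix~\ref{app_A}. I would do this indirectly via the resolvent: by the resolvent characterization of Feller semigroups (Hille--Yosida), it suffices to show (i) $R_\gl$ maps $\Co$ into itself for every $\gl>0$, and (ii) $\|\gl R_\gl f - f\|\to 0$ as $\gl\to\infty$ for every $f\in\Co$. Throughout, the workhorse is the first passage time formula~\eqref{eq_2_6}, applied either at $S=H_V$ (for $\xi\in\cGo$) or at $S=H_v$ (for $\xi$ close to a vertex $v$), combined with Lemmas~\ref{lem_4_1} and~\ref{lem_4_2}.

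For (i), I split according to the location of $\xi$. If $\xi=(l,x)\in l\op$, applying~\eqref{eq_2_6} with $S=H_V=H_{\p(l)}$ (cf.\ Remark~\ref{rem_3_2a}) reduces, by Definition~\ref{def_3_1}, both terms to explicit expressions involving the law of absorbed Brownian motion on $I_l$ started at $x$ and the two values of $R_\gl f$ on $\p(l)$, which classical Brownian formulas show to be continuous in $x$. If $\xi=v\in V$ and $\eta\to v$ along some $l\in\cL(v)$, I apply~\eqref{eq_2_6} with $S=H_v$:
\begin{equation*}
    R_\gl f(\eta) = E_\eta\Big(\int_0^{H_v} e^{-\gl t} f(X_t)\,dt\Big)
                    + E_\eta\bigl(e^{-\gl H_v}\bigr)\, R_\gl f(v).
\end{equation*}
The first summand is bounded by $(\|f\|/\gl)\,E_\eta(1-e^{-\gl H_v})$ and hence vanishes as $\eta\to v$ by Lemma~\ref{lem_4_2}, while the second tends to $R_\gl f(v)$ by the same lemma. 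Decay at infinity is handled by taking $\xi=(e,x)$ on an external edge $e\in\cE$, applying~\eqref{eq_2_6} with $S=H_{\p(e)}$, and using that $E^Q_x(e^{-\gl H^B_0})\to 0$ and $E^Q_x(\int_0^{H^B_0} e^{-\gl t}|f(B_t)|\,dt)\to 0$ as $x\to+\infty$ whenever $f\in\Co$.

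For (ii), I would rewrite $\gl R_\gl f(\xi)-f(\xi) = \int_0^\infty e^{-s}(U_{s/\gl}f(\xi)-f(\xi))\,ds$, so by dominated convergence it suffices to show $\sup_\xi|U_tf(\xi)-f(\xi)|\to 0$ as $t\downarrow 0$. Using the uniform continuity of $f\in\Co$ on the Polish space $(\cG,d)$ and the decomposition
\begin{equation*}
    |U_tf(\xi)-f(\xi)|\le \sup_{\eta\in B_\gd(\xi)}|f(\eta)-f(\xi)| + 2\|f\|\,P_\xi\bigl(X_t\notin B_\gd(\xi)\bigr),
\end{equation*}
this reduces to the uniform small-ball estimate $\sup_\xi P_\xi(X_t\notin B_\gd(\xi))\to 0$ as $t\downarrow 0$ (for any fixed $\gd>0$). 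Away from vertices this is immediate from Definition~\ref{def_3_1} and the classical small-ball estimate for Brownian motion. Near a vertex $v$, Lemma~\ref{lem_4_2} quantitatively forces the process started at $\eta$ close to $v$ to reach $v$ very quickly, so that the displacement analysis reduces to the behaviour at $v$ on a small time interval, which, by the trichotomy of Section~\ref{sect_3}, is either trivial (trap or jump to $\gD$) or is governed by absorbed Brownian motion on the incident edges.

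The main obstacle I expect is precisely the uniform small-ball estimate near vertices in (ii): the behaviour of $X$ at a vertex is \emph{a priori} only known through the trichotomy of Section~\ref{sect_3}, and the displacement of $X_t$ on $[0,t]$ depends on this unknown behaviour whenever $\xi$ is very close to a vertex. What makes the argument go through is that Lemma~\ref{lem_4_2} provides a uniform quantitative version of ``$H_v$ is small when starting near $v$'', allowing the analysis on $[0,t]$ to be localised to a small neighborhood of $v$ and then controlled by the absorbed Brownian motion on each incident edge until the first hit of another vertex.
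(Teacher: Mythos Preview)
Your part~(i) is essentially the paper's argument: continuity of $R_\gl f$ on $\cGo$ via the first passage time formula and Lemma~\ref{lem_4_1}, continuity at vertices via~\eqref{eq_2_6} with $S=H_v$ and Lemma~\ref{lem_4_2}, and decay at infinity via~\eqref{eq_2_6} on external edges. (The paper stops at an intermediate point $\xi_1$ rather than at $\p(e)$, which makes the estimate of the integral term slightly cleaner, but your variant also goes through.)

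Where you diverge from the paper is in part~(ii), and here you are working much harder than necessary. The paper does \emph{not} prove the uniform statement $\|\gl R_\gl f - f\|\to 0$ (equivalently $\|U_t f - f\|\to 0$) directly. Instead it invokes Theorem~\ref{thm_A_3}, whose condition~(v) says that once $R\Co\subset\Co$ is known, the \emph{pointwise} convergence $U_t f(\xi)\to f(\xi)$ for each fixed $\xi$ already suffices. And that pointwise statement is a one-liner: $X$ has right-continuous paths with $P_\xi(X_0=\xi)=1$, so $f(X_t)\to f(\xi)$ a.s., and dominated convergence finishes it. The passage from pointwise to uniform is then absorbed into the abstract machinery of Appendix~\ref{app_A} (Lemmas~\ref{lem_A_5}--\ref{lem_A_8}).

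Your direct attack on the uniform small-ball estimate is where the sketch becomes genuinely incomplete. In the case $\gb_v=+\infty$ you write that the behaviour at $v$ ``is governed by absorbed Brownian motion on the incident edges'', but this is not what Definition~\ref{def_3_1} gives you: that description is only valid \emph{between} successive visits to $V$, and a process starting at $v$ may return to $v$ infinitely often on $[0,t]$. The trichotomy of Section~\ref{sect_3} tells you nothing quantitative about $P_v\bigl(d(v,X_t)>\gd\bigr)$ in this case. One can rescue the argument---$V$ is finite, so pointwise convergence at each $v$ (from right continuity) is automatically uniform over $V$, and the reduction from ``near $v$'' to ``at $v$'' via strong Markov at $H_v$ and Lemma~\ref{lem_4_2} can be made rigorous---but this takes real work that your sketch does not supply, and the paper's route via Theorem~\ref{thm_A_3} sidesteps the issue entirely.
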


\begin{proof}
The proof is based on the first passage time formula~\eqref{eq_2_6}. By
theorem~\ref{thm_A_3} in appendix~\ref{app_A} it suffices to show that for all
$\gl>0$, $R_\gl$ maps $\Co$ into itself, and that for all $f\in \Co$, $\xi\in\cG$,
$U_t f(\xi)$ converges to $f(\xi)$ as $t\downarrow 0$.

First we show that for every $\gl>0$, $R_\gl$ maps $\Co$ into itself. Assume that
$f\in \Co$. Consider the case $\xi\in\cGo$. Then it follows from lemma~\ref{lem_4_1}
as in~\cite[Section~3.6]{ItMc74} that  $R_\gl f$ is continuous at $\xi$. Consider now
the case $\xi=v\in V$, let $l$ belong to the set $\cL(v)$ of edges incident with $v$,
and let $\eta\in l$. Note that $P_\eta$--a.s.\ $H_v$ is finite (cf.\
section~\ref{sect_3}). Therefore we can employ equation~\eqref{eq_2_6} with $S=H_v$:
\begin{equation*}
    R_\gl f(\eta) = E_\eta\Bigl(\int_0^{H_v} e^{-\gl t} f(X_t)\,dt\Bigr)
                    + E_\eta\bigl(e^{-\gl H_v}\bigr)\,R_\gl f(v).
\end{equation*}
Using~\eqref{eq_2_5} we estimate in the following way
\begin{align*}
    \bigl|R_\gl f(\eta) - R_\gl f(v)\bigr|
        &\le \Bigl|E_\eta\Bigl(\int_0^{H_v} e^{-\gl t} f(X_t)\,dt\Bigr)\Bigr|\\
        &\hspace{4em} +\Bigl(1-E_\eta\bigl(e^{-\gl H_v}\bigr)\Bigr)\,\bigl|R_\gl f(v)\bigr|\\
        &\le \frac{2}{\gl}\,\|f\|\,\Bigl(1-E_\eta\bigl(e^{-\gl H_v}\bigr)\Bigr).
\end{align*}
By lemma~\ref{lem_4_2} this term converges to zero as $\eta$ converges to $v$ along
$l$. Therefore $R_\gl f$ is also continuous at $v$.

Next we prove that  for all $\gl>0$, $f\in \Co$, $R_\gl f$ vanishes at infinity. If
$\cG$ has no external edges there is nothing to prove, and so we assume that
$e\in\cE$ is an external edge of $\cG$ which is incident with the vertex $v\in V$,
$\p(e)=\{v\}$. Let $\gl$, $\gep>0$ be given. We choose $r_1\ge 0$ large enough so that
for all $\xi\in e$ with $d(v,\xi)>r_1$ we have $|f(\xi)|< \gep\gl/2$. Choose $r_2>r_1$,
and consider $\xi\in e$ with $d(v,\xi)\ge r_2$. Denote by $\xi_1$ the point on  $e$ which
has distance $r_1$ to $v$. Then we have that $P_\xi$--a.s., $H_{\xi_1}\le H_v$, and
consequently $P_\xi(H_{\xi_1}<+\infty)=1$. Hence we can use the first passage time
formula~\eqref{eq_2_6} in the form
\begin{equation*}
    R_\gl f(\xi) = E_\xi\Bigl(\int_0^{H_{\xi_1}} e^{-\gl t} f(X_t)\,dt\Bigr)
                    +E_\xi\bigl(e^{-\gl H_{\xi_1}}\bigr)\,R_\gl f(\xi_1).
\end{equation*}
For $t\in [0,H_{\xi_1}]$ we have $d(v,X_t)\ge r_1$, and therefore the absolute value
of the first term on the right hand side is bounded from above by $\gep/2$. For the
second term we can compute the expectation as for the corresponding expression of
the standard Brownian motion $B$ on $\R$, and we obtain (again with~\eqref{eq_2_5})
\begin{align*}
    \Bigl|E_\xi\bigl(e^{-\gl H_{\xi_1}}\bigr)\,R_\gl f(\xi_1)\Bigr|
        &=   e^{-\sgl\,d(\xi,\xi_1)}\,\bigl|R_\gl f(\xi_1)\bigr|\\
        &\le e^{-\sgl\,(r_2-r_1)}\,\frac{\|f\|}{\gl}.
\end{align*}
Now choose $r_2$ large enough so as to make the last term less than $\gep/2$, and
we are done. Thus we have shown that for every $\gl>0$, $R_\gl$ maps $\Co$ into
itself.

Finally, consider for $f\in \Co$, $\xi\in \cG$, $t>0$,
\begin{equation*}
    U_t f(\xi) = E_\xi\bigl(f(X_t)\bigr).
\end{equation*}
By definition, $X$ has right continuous sample paths, and $P_\xi(X_0=\xi)=1$. Since $f$
is continuous and bounded, an application of the dominated convergence theorem shows
that $U_t f(\xi)$ converges to $f(\xi)$ as $t$ decreases to $0$.
\end{proof}

\section{Generators and Feller's Theorem}     \label{sect_5}
Let $V_\cL$ denote the subset of $V\times\cL$ given by
\begin{equation*}
    V_\cL = \bigl\{(v,l),\,v\in V \text{ and }l\in\cL(v)\bigr\}.
\end{equation*}
We shall also write $v_l$ for $(v,l)\in V_\cL$. We remark in passing that
\begin{equation*}
    \bigl|V_\cL\bigr| = |\cE| + 2\,|\cI|.
\end{equation*}

Consider a real valued function $f$ on $\cG$, let $v\in V$ and let $l\in\cL(v)$
be an edge incident with $v$. We define the \emph{directional derivative of $f$
at $v$ in direction $l\in\cL(v)$} as follows:
\begin{equation}    \label{eq5i}
    f'(v_l) = \begin{cases}
                \displaystyle
                \phantom{-}\lim_{\xi\to v,\,\xi\in l\op} f'(\xi),
                        & \text{if $v$ is an initial vertex of $l$},\\[2ex]
                \displaystyle
                -\lim_{\xi\to v,\,\xi\in l\op} f'(\xi),
                        & \text{if $v$ is a final vertex of $l$},
              \end{cases}
\end{equation}
whenever the corresponding limit on the right hand side exists. Geometrically this
directional derivative is just the inward normal derivative which makes it an
intrinsic definition, independent of the orientation chosen on the edge.

\begin{definition}  \label{def_5_1}
$\Coo$ denotes the subspace of functions $f$ in $\Co$ which are twice continuously
differentiable on $\cGo$, and such that for every $v\in V$ and all $l\in\cL$
the limit
\begin{equation}   \label{eq5ia}
    f''(v_l) = \lim_{\xi\to v,\, \xi\in l\op} f''(\xi)
\end{equation}
exists. $\Cii$ denotes the subspace of those functions $f$ in $\Coo$ so that $f''$
extends from $\cGo$ to a continuous function on $\cG$.
\end{definition}

Thus $\Cii$ consists of all $f\in\Coo$ so that for every $v\in V$, the $f''(v_l)$,
$l\in\cL(v)$, are all equal.
Assume that $f\in\Cii$, and let $v\in V$. The continuous extension of $f''$ to $v$
will simply be denoted by $f''(v)$. Consider an edge $l\in\cL(v)$ incident with $v$.
Then it is easy to see that $f'(v_l)$ exists (and is finite). On the other hand, in
general for $l$, $l'\in \cL(v)$, $l\ne l'$, we have $f'(v_l)\ne f'(v_{l'})$. In
other words, in general $f'$ does \emph{not} have a continuous extension from $\cGo$
to $\cG$. Also, it is not hard to check that $f'$ vanishes at infinity.

The proof of the following lemma can be taken over with minor modifications from
the standard literature, e.g., from~\cite[Chapter~6.1]{Kn81}.

\begin{lemma}   \label{lem_5_2}
For every Brownian motion $X$ on the metric graph $\cG$, the generator $A$ of its
semigroup $U$ acting on $\Co$ has a domain $\cD(A)$ contained in $\Cii$. Moreover,
for every $f\in\cD(A)$, $A f=1/2\,f''$.
\end{lemma}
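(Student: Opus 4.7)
I would follow the classical Feller--It\^o--McKean strategy, adapted to the graph setting as in~\cite[Ch.~6.1]{Kn81}. The Feller property established in Theorem~\ref{thm_4_3} together with Hille--Yosida theory gives $\cD(A)=R_\gl(\Co)$ for every $\gl>0$, and on this domain $A=\gl I-R_\gl^{-1}$; in particular, for $f=R_\gl g$ with $g\in\Co$ one has $Af=\gl f-g$. It therefore suffices to show that every such $f=R_\gl g$ lies in $\Cii$ and satisfies $\tfrac12 f''=\gl f-g$ pointwise on $\cG$, and then the formula $Af=\tfrac12 f''$ follows automatically.

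\textbf{Interior analysis.} Fix $l\in\cL$, identify $l$ with the interval $I_l$, and set $u(y)=R_\gl g(l,y)$ for $y\in I_l\op$. Pick an arbitrary compact subinterval $[a,b]\subset I_l\op$ disjoint from $V$ and let $S=H_{\{(l,a),(l,b)\}}$. For $y\in(a,b)$, $P_{(l,y)}$--a.s.\ $S<H_V$, so by Definition~\ref{def_3_1} the stopped process $(X_{t\land S})_{t\ge0}$ under $P_{(l,y)}$ has the same law as $(B_{t\land\tau_J})_{t\ge0}$ under $Q_y$, where $\tau_J$ is the exit time of $B$ from $(a,b)$. Plugging this $S$ into the first passage time formula~\eqref{eq_2_6} yields
\begin{equation*}
    u(y)=E^Q_y\Bigl(\int_0^{\tau_J}e^{-\gl t}g(l,B_t)\,dt\Bigr)
        + u(a)\,E^Q_y\bigl(e^{-\gl\tau_J};B_{\tau_J}=a\bigr)
        + u(b)\,E^Q_y\bigl(e^{-\gl\tau_J};B_{\tau_J}=b\bigr).
\end{equation*}
The two hitting-place Laplace transforms are explicit multiples of $\sinh(\sgl(b-y))$ and $\sinh(\sgl(y-a))$, hence smooth on $(a,b)$ and annihilated by $\tfrac12\partial_y^2-\gl$. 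The integral term is the Dirichlet $\gl$-resolvent of $\tfrac12\partial_y^2$ on $(a,b)$ applied to the continuous function $g(l,\cdot)$, hence is $C^2((a,b))$ and is sent by $\tfrac12\partial_y^2-\gl$ to $-g(l,y)$. Summing the three contributions gives $u\in C^2((a,b))$ and $\tfrac12 u''(y)=\gl u(y)-g(l,y)$; since $[a,b]$ was arbitrary, $u\in C^2(I_l\op)$ and the ODE holds on all of $I_l\op$.

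\textbf{Behavior at vertices.} Both $f=R_\gl g\in\Co$ and $g\in\Co$ are continuous at each $v\in V$. Combined with the ODE $\tfrac12 f''(\xi)=\gl f(\xi)-g(\xi)$ on $\cGo$ obtained above, this forces
\begin{equation*}
    \lim_{\xi\to v,\,\xi\in l\op}f''(\xi)=2\gl f(v)-2g(v)
\end{equation*}
to exist for every $l\in\cL(v)$ and, crucially, to be the \emph{same value} for every such $l$. Hence $f\in\Coo$ and $f''$ extends continuously from $\cGo$ to $\cG$, which is precisely the statement $f\in\Cii$. The identity $Af=\gl f-g=\tfrac12 f''$ then holds on $\cGo$ by construction and on all of $\cG$ by continuity, completing the argument.

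\textbf{Main obstacle.} The delicate step is the interior analysis: extracting $C^2$-regularity of $R_\gl g$ along an edge from only the probabilistic definition of $X$. What makes it go through is precisely the leverage provided by Definition~\ref{def_3_1} — that prior to hitting $V$, $X$ inside an open edge is in law a standard Brownian motion — which lets me replace the path-integral expressions in~\eqref{eq_2_6} by their explicit Brownian-motion analogues on an interval. Once that reduction is made the computation is entirely classical, and the non-trivial conceptual content of the lemma is the observation that the ODE on each edge \emph{automatically} forces all the one-sided second derivatives at a vertex to agree, irrespective of how the process behaves at $V$.
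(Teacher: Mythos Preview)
Your proof is correct and is exactly the argument the paper has in mind: the paper does not spell out a proof at all but simply refers to~\cite[Chapter~6.1]{Kn81}, and what you have written is precisely that classical resolvent argument transplanted to the edge-by-edge setting. The one point worth making explicit is the conceptual punchline you identify at the end --- that continuity of $f=R_\gl g$ and $g$ at a vertex forces the one-sided second derivatives $f''(v_l)$ to coincide across all $l\in\cL(v)$ via the ODE --- which is the only place the graph structure enters beyond the interval case.
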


Consider data of the following form
\begin{equation}    \label{eq5ii}
\begin{split}
    a &= (a_v,\,v\in V)\in [0,1)^V\\
    b &= (b_{v_l},\,v_l\in V_\cL) \in [0,1]^{V_\cL}\\
    c &= (c_v,\,v\in V)\in [0,1]^V
\end{split}
\end{equation}
subject to the condition
\begin{equation}    \label{eq5iii}
    a_v + \sum_{l\in \cL(v)} b_{v_l} + c_v =1,\qquad \text{for every $v\in V$}.
\end{equation}
We define a subspace $\cH_{a,b,c}$ of $\Cii$ as the space of those functions
$f$ in $\Cii$ which at every vertex $v\in V$ satisfy the Wentzell boundary
condition
\begin{equation}\label{eq5iv}
    a_v f(v) - \sum_{l\in\cL(v)} b_{v_l}f'(v_l)+ \frac{1}{2}\,c_v f''(v)=0.
\end{equation}

Now we can state and prove the analogue of \emph{Feller's theorem} for metric
graphs.

\begin{theorem} \label{thm_5_3}
Suppose that $X$ is a Brownian motion on a metric graph $\cG$, and that $\cD(A)$
is the domain of the generator $A$ of its semigroup. Then there are $a$, $b$, $c$
as in~\eqref{eq5ii}, \eqref{eq5iii}, so that $\cD(A)=\cH_{a,b,c}$.
\end{theorem}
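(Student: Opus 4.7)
The plan is to derive the Wentzell condition at each vertex by applying the first passage time formula \eqref{eq_2_6} to elements of $\cD(A)$ at the exit time from a small ball, and then establish equality $\cD(A)=\cH_{a,b,c}$ via an injectivity argument for the resolvent equation. Throughout, lemma~\ref{lem_5_2} gives $\cD(A)\subseteq\Cii$ with $Af=\tfrac12 f''$, and theorem~\ref{thm_4_3} ensures via standard Feller semigroup theory that $R_\gl:\Co\to\cD(A)$ is a bijection and $\cD(A)$ is dense in $\Co$.

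\emph{Forward inclusion.} Fix $v\in V$, $\gl>0$, and for small $\gep>0$ set $T_\gep=H(B_\gep(v)^c)$, and let $\xi_l^\gep\in l$ be the point at distance $\gep$ from $v$ on each $l\in\cL(v)$. In the trap regime of Section~\ref{sect_3} ($T_\gep=+\infty$ $P_v$-a.s.), direct computation $f(v)=R_\gl g(v)=g(v)/\gl$ gives $f''(v)=0$, i.e.\ \eqref{eq5iv} with $(a_v,b_{v_l},c_v)=(0,0,1)$. Otherwise $T_\gep<\infty$ a.s., and applying \eqref{eq_2_6} to $f=R_\gl g$ with $g=\gl f-\tfrac12 f''\in\Co$ and $S=T_\gep$, after decomposing $X_{T_\gep}$ into the edge exits $\{\xi_l^\gep\}_{l\in\cL(v)}$ and a jump to $\gD$ (where $f(\gD)=0$), yields
\[
    f(v) = E_v\Bigl(\int_0^{T_\gep} e^{-\gl t} g(X_t)\,dt\Bigr)
           + \sum_{l\in\cL(v)} p_l(\gep)\,f(\xi_l^\gep),
\]
where $p_l(\gep)=E_v(e^{-\gl T_\gep};X_{T_\gep}=\xi_l^\gep)$, $q(\gep)=E_v(e^{-\gl T_\gep};X_{T_\gep}=\gD)$ and $u(\gep)=\sum_l p_l(\gep)+q(\gep)$ are intrinsic to the process (independent of $f$) and belong to $[0,1]$. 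Taylor-expanding $f(\xi_l^\gep)=f(v)+\gep f'(v_l)+o(\gep)$ (legitimate since $f\in\Cii$), writing $E_v(\int_0^{T_\gep}e^{-\gl t}g(X_t)\,dt)=\tfrac{1-u(\gep)}{\gl}g(v)+o(\tfrac{1-u(\gep)}{\gl})$ from continuity of $g$ near $v$, substituting $g(v)=\gl f(v)-\tfrac12 f''(v)$, and rearranging yields
\[
    q(\gep)f(v) - \gep\sum_l p_l(\gep) f'(v_l) + \frac{1-u(\gep)}{2\gl} f''(v) = o(M(\gep)),
\]
with $M(\gep)=q(\gep)+\gep\sum_l p_l(\gep)+\tfrac{1-u(\gep)}{\gl}>0$. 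Dividing by $M(\gep)$ gives non-negative coefficients summing to $1$; by compactness in $[0,1]^{|\cL(v)|+2}$, extract a subsequence $\gep_n\da 0$ along which these converge to $(a_v,(b_{v_l})_l,c_v)$, whose Wentzell identity \eqref{eq5iv} holds for every $f\in\cD(A)$. Finally $a_v<1$: otherwise $b_{v_l}=c_v=0$ and \eqref{eq5iv} would force $f(v)=0$ throughout $\cD(A)$, contradicting its density in $\Co$.

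\emph{Reverse inclusion.} It suffices to show that $L:=\gl-\tfrac12\p^2$ is injective on $\cH_{a,b,c}$ for some $\gl>0$: then for $f\in\cH_{a,b,c}$ with $g:=Lf\in\Co$, the element $R_\gl g\in\cD(A)\subseteq\cH_{a,b,c}$ also satisfies $LR_\gl g=g$, so $f-R_\gl g\in\ker L|_{\cH_{a,b,c}}=\{0\}$, giving $f\in\cD(A)$. For injectivity, assume $f\in\cH_{a,b,c}$ with $f''=2\gl f$; on each edge $f$ is a combination of $e^{\pm\sgl x}$, with only the decaying mode on external edges since $f\in\Co$. Suppose for contradiction that $f$ attains a positive maximum at $\xi^*$. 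An interior $\xi^*\in\cGo$ is excluded because $f'(\xi^*)=0$ and $f''(\xi^*)\le 0$ contradicts $f''(\xi^*)=2\gl f(\xi^*)>0$. At $\xi^*=v\in V$, the inward derivatives satisfy $f'(v_l)\le 0$ for every $l\in\cL(v)$, while \eqref{eq5iv} rewritten via $f''(v)=2\gl f(v)$ reads $(a_v+\gl c_v)f(v)=\sum_l b_{v_l}f'(v_l)$; the left side is non-negative and the right side is non-positive, so both vanish. Since $f(v)>0$ and $\gl>0$ this forces $a_v=c_v=0$ (hence $\sum_l b_{v_l}=1$) and $f'(v_l)=0$ for every $l$ with $b_{v_l}>0$; but on such an edge $l$, strict convexity $f''=2\gl f>0$ near $v$ together with $f'(v_l)=0$ forces $f$ to strictly exceed $f(v)$ nearby, contradicting the maximum. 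Hence $\max f\le 0$, symmetrically $\min f\ge 0$, so $f\equiv 0$.

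\emph{Main obstacle.} The principal technical difficulty lies in the asymptotic analysis of the forward step, specifically controlling the error terms uniformly as $\gep\da 0$: the normalising scale $M(\gep)$ behaves differently in the three regimes of Section~\ref{sect_3} ($M\sim 1/\gl$ in the trap case, $M\asymp 1$ in the exponentially-holding case, $M=O(\gep)$ in the instantaneously-leaving case), and one must verify that the Taylor remainder and the error from replacing $g(X_t)$ by $g(v)$ are both $o(M(\gep))$ in each regime. This requires sharp asymptotics on $E_v(e^{-\gl T_\gep})$, $p_l(\gep)$ and $q(\gep)$, which are obtained by comparison with standard Brownian motion on each edge, in the spirit of lemmas~\ref{lem_4_1} and~\ref{lem_4_2}.
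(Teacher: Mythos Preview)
Your proof is correct, and both halves differ from the paper's argument in interesting ways.

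For the forward inclusion, the paper works directly with Dynkin's characteristic operator
\[
Af(v)=\lim_{\gep\da 0}\frac{E_v\bigl(f(X(H_{v,\gep}))\bigr)-f(v)}{E_v(H_{v,\gep})}
\]
in the case $\gb_v=+\infty$, after treating the trap and exponentially-holding cases by separate direct computations of $U_t f(v)$. Your approach via the resolvent first-passage-time formula~\eqref{eq_2_6} is essentially the Laplace-transformed version of this: the normalization-and-compactness extraction is structurally the same, but your quantities $p_l(\gep),q(\gep)$ carry a discount factor $e^{-\gl T_\gep}$ where the paper's $r_l(\gep),r_\gD(\gep)$ carry $1/E_v(H_{v,\gep})$. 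Your route has the mild advantage that the exponentially-holding case is absorbed into the same computation rather than handled separately. The error analysis you flag as the ``main obstacle'' is actually uniform and needs nothing beyond what you wrote: each remainder is $o$ of a nonnegative summand of $M(\gep)$, hence $o(M(\gep))$, regardless of regime.

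For the reverse inclusion the approaches diverge substantially. The paper reduces injectivity of $\gl-\tfrac12\p^2$ on $\cH_{a,b,c}$ to invertibility of an explicit $(|\cE|+2|\cI|)\times(|\cE|+2|\cI|)$ matrix $Z(\gk)$ encoding the boundary conditions at all vertices simultaneously, and then shows $\det Z(\gk)$ is an entire function not identically zero via a perturbation estimate (lemma~\ref{lem_5_7}) along the negative real $\gk$-axis. Your maximum-principle argument is markedly more elementary and intrinsic: it avoids the matrix machinery entirely and works for every $\gl>0$ at once. What the paper's route buys is the explicit boundary matrices $A,B,C$, which are reused in the construction of the processes in the companion papers; for the bare statement of theorem~\ref{thm_5_3}, however, your argument is cleaner.
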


\begin{remark}  \label{rem_5_4}
The case $a_v=1$, $v\in V$, would correspond to (zero) Dirichlet boundary
conditions at the vertex $v$. The paths of the process associated with this boundary
condition have to jump instantaneously to $\gD$ when reaching the vertex, and by our
requirement that the paths are right continuous this means that the process will
never be at the vertex. But this is in contradiction to our assumption (cf.\
definition~\ref{def_3_1}) that the process with absorption at the vertex is
equivalent to a Brownian motion with absorption in the endpoint (endpoints, resp.)
of the corresponding interval. Therefore this stochastic process is \emph{not} a
Brownian motion on $\cG$ in the sense of definition~\ref{def_3_1}, and this case has
to be excluded from our discussion. Also note that in this case the semigroup does
not act strongly continuously on $\Co$, and therefore is in particular not Feller.
\end{remark}

The proof of theorem~\ref{thm_5_3} has two rather distinct parts, and therefore
we split it by proving the following two lemmas:

\begin{lemma}   \label{lem_5_5}
Suppose that $X$ is a Brownian motion on a metric graph $\cG$, and that $\cD(A)$
is the domain of the generator $A$ of its semigroup. Then there are $a$, $b$, $c$
as in~\eqref{eq5ii}, \eqref{eq5iii}, so that $\cD(A)\subset\cH_{a,b,c}$.
\end{lemma}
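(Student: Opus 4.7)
The plan is to derive the Wentzell relation at each vertex $v\in V$ by combining Lemma \ref{lem_5_2} (which gives $\cD(A)\subset\Cii$ and hence legitimizes Taylor expansion of $f$ near $v$) with the first passage time formula \eqref{eq_2_6}, applied at the exit time $\tau_\gep = H(B_\gep(v)^c)\wedge \zeta$ for $\gep>0$ small enough that $B_\gep(v)$ contains no vertex other than $v$. The coefficients $a_v$, $b_{v_l}$, $c_v$ will arise as subsequential limits of exit mode probabilities from $B_\gep(v)$ as $\gep\da 0$.

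Concretely, for fixed $v\in V$, $\gl>0$, and $f\in\cD(A)$, I would set $g = (\gl-A)f \in \Co$, so $g = \gl f - \tfrac{1}{2}f''$ on $\cG$ (the $f''$ being extended continuously by Lemma \ref{lem_5_2}) and $f = R_\gl g$. Substituting into \eqref{eq_2_6} with $S=\tau_\gep$, and writing $q_v = E_v((1-e^{-\gl\tau_\gep})/\gl)$, $p_v^\gD = E_v(e^{-\gl\tau_\gep}; X_{\tau_\gep}=\gD)$, and $p_v^l = E_v(e^{-\gl\tau_\gep}; X_{\tau_\gep}=\eta_l)$, where $\eta_l$ is the point at distance $\gep$ from $v$ along $l\in\cL(v)$, one has $p_v^\gD + \sum_l p_v^l + \gl q_v = 1$. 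Continuity of $g$ at $v$ gives $E_v\bigl(\int_0^{\tau_\gep} e^{-\gl t}g(X_t)\,dt\bigr) = g(v)\,q_v + o(q_v)$, and Taylor expansion on each edge yields $f(\eta_l) = f(v) + \gep f'(v_l) + O(\gep^2)$ with constant controlled by $\|f''\|_{\overline{B_\gep(v)}}$. After substituting $g(v) = \gl f(v) - f''(v)/2$ and rearranging, \eqref{eq_2_6} becomes
\begin{equation*}
    p_v^\gD\,f(v) - \gep\sum_{l\in\cL(v)} p_v^l\,f'(v_l) + \frac{q_v}{2}\,f''(v) = O(\gep^2)\sum_l p_v^l + o(q_v).
\end{equation*}

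The next step is to divide by $N := p_v^\gD + \gep\sum_l p_v^l + q_v$, producing coefficients $\tilde a_v^\gep, \tilde b_{v_l}^\gep, \tilde c_v^\gep \in [0,1]$ that sum to $1$. The first error becomes $O(\gep)$ since $N\ge \gep\sum_l p_v^l$; the second is dominated by $o(q_v)/N \le o(1)$. By compactness of $[0,1]^{2+|\cL(v)|}$, I extract a subsequence $\gep_n\da 0$ along which all coefficients converge to $a_v, (b_{v_l})_l, c_v$ satisfying \eqref{eq5iii}. As these limits depend only on the process near $v$ and not on $f$, passing to the limit gives \eqref{eq5iv} for every $f\in\cD(A)$.

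Finally I would rule out $a_v=1$: such equality would force $b_{v_l} = c_v = 0$, reducing \eqref{eq5iv} to $f(v)=0$ for all $f\in\cD(A)$, contradicting the density of $\cD(A)$ in $\Co$ guaranteed by the Feller property (Theorem \ref{thm_4_3}). Repeating the construction at each $v\in V$ produces the desired family $(a,b,c)$ with $\cD(A)\subset\cH_{a,b,c}$. The main obstacle I anticipate is the uniform control of the error terms against $N$ across the three dynamical regimes of Section \ref{sect_3} (trap $\gb_v=0$, holding $0<\gb_v<\infty$, immediate excursion $\gb_v=\infty$), where the qualitative balance between $p_v^\gD$, $\gep\sum_l p_v^l$, and $q_v$ changes; working with $\gl>0$ throughout and relying on the bound $N\ge \gep\sum_l p_v^l$ is meant to handle all three cases in one stroke.
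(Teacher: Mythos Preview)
Your approach is correct but genuinely different from the paper's. The paper splits into three cases according to the exit rate $\gb_v$ from $v$ (trap, exponentially holding, immediate exit), handling the first two by direct computation of $U_t f(v)$ and the third via Dynkin's characteristic operator formula~\eqref{eq_5_3} with normalization $K(\gep) = 1 + r_\gD(\gep) + \gep\sum_l r_l(\gep)$ built from \emph{undiscounted} exit probabilities divided by $E_v(H_{v,\gep})$. You instead run a single argument through the resolvent identity~\eqref{eq_2_6} with \emph{discounted} exit quantities $p_v^\gD$, $p_v^l$, $q_v$, and normalize by $N = p_v^\gD + \gep\sum_l p_v^l + q_v$. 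The payoff of your route is uniformity: the trap and holding cases fall out automatically (with $p_v^l\equiv 0$ and $q_v$ bounded away from zero), and you never need to know that $E_v(H_{v,\gep})<\infty$ or invoke Dynkin's form of the generator. The paper's route is more elementary in the degenerate cases and yields the explicit coefficients~\eqref{eq_5_2} directly, whereas you recover them only after the subsequential limit.

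One technical point to tighten: formula~\eqref{eq_2_6} as stated in the paper requires $S$ to be $P_\xi$--a.s.\ finite, which fails precisely when $\gb_v=0$. The extension to general stopping times (with $e^{-\gl S}=0$ on $\{S=\infty\}$) is immediate from the strong Markov property and dominated convergence, but you should say so explicitly rather than cite~\eqref{eq_2_6} as written. With that adjustment your error control goes through: $|R|\le \omega_g(\gep)\,q_v$ gives $|R|/N\le \omega_g(\gep)\to 0$, and $O(\gep^2)\sum_l p_v^l/N\le O(\gep)$ whenever $\sum_l p_v^l>0$ (and is zero otherwise). Your density argument to exclude $a_v=1$ is also sound; the paper does not spell this step out in the proof of the lemma but relies on the discussion of remark~\ref{rem_5_4}.
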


\begin{lemma}   \label{lem_5_6}
Suppose that $A$ is the generator of a Brownian motion $X$ on $\cG$ with domain
$\cD(A)\subset \cH_{a,b,c}$ for some $a$, $b$, $c$ as in~\eqref{eq5ii},
\eqref{eq5iii}. Then $\cD(A)=\cH_{a,b,c}$
\end{lemma}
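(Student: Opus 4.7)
The plan is to exploit the bijection $R_\gl : \Co \to \cD(A)$ provided by the Feller property (theorem~\ref{thm_4_3}), together with lemma~\ref{lem_5_2}, to reduce the reverse inclusion $\cH_{a,b,c} \subset \cD(A)$ to a uniqueness statement for a boundary-value problem. Given $f \in \cH_{a,b,c}$ and some fixed $\gl > 0$, I would form $g = \gl f - \tfrac{1}{2} f'' \in \Co$ and $h = R_\gl g \in \cD(A) \subset \cH_{a,b,c}$. Since $(\gl - A)h = g$ and $A h = \tfrac{1}{2} h''$ by lemma~\ref{lem_5_2}, the difference $u = h - f$ lies in $\cH_{a,b,c}$ and satisfies the ODE $u'' = 2\gl u$ on every open edge. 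It therefore suffices to prove $u \equiv 0$, as this yields $f = h \in \cD(A)$.

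I would establish $u \equiv 0$ via a maximum principle adapted to the Wentzell conditions. If $u$ has a strictly positive supremum $M$ (the negative-infimum case being symmetric under $u \mapsto -u$), then $M$ is attained: $u \in \Co$ vanishes along every external edge and $V$ is finite. The attainment point cannot lie in $\cGo$, since at an interior edge maximum $\xi_0$ the second-derivative test forces $u''(\xi_0) \le 0$, whereas the ODE gives $u''(\xi_0) = 2\gl M > 0$. Hence the maximum is attained at some vertex $v \in V$, where each inward directional derivative satisfies $u'(v_l) \le 0$ and $u''(v) = 2\gl M > 0$. Substituting into~\eqref{eq5iv} rearranges to
\[
    (a_v + \gl\, c_v)\, M \;=\; \sum_{l \in \cL(v)} b_{v_l}\, u'(v_l),
\]
whose left-hand side is nonnegative and right-hand side nonpositive, so both vanish. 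Since $\gl > 0$ and $a_v, c_v \ge 0$, this forces $a_v = c_v = 0$, and~\eqref{eq5iii} then yields $\sum_l b_{v_l} = 1$; in particular, some $b_{v_{l_0}} > 0$ with $u'(v_{l_0}) = 0$.

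The main obstacle is extracting a contradiction from this vanishing-derivative condition, for which I would invoke the explicit ODE solution on $l_0$. The initial data $u(v) = M$ and $u'(v_{l_0}) = 0$ select $u(x) = M \cosh(\sgl\, x)$, with $x$ the arc length from $v$ along $l_0$. If $l_0 \in \cE$ this contradicts $u \in \Co$, because $\cosh$ diverges at infinity; if $l_0 \in \cI$, evaluating at the opposite endpoint gives $M \cosh(\sgl\, \rho_{l_0}) > M$, contradicting maximality. Either way we reach a contradiction, so $u \equiv 0$ and $f = h \in \cD(A)$, completing the reverse inclusion.
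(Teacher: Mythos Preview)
Your proof is correct and takes a genuinely different route from the paper's. Both arguments reduce the question to showing that the only $u\in\cH_{a,b,c}$ satisfying $u''=2\gl u$ on every edge is $u\equiv 0$, but the methods of establishing this uniqueness diverge. The paper writes the general solution on each edge explicitly in terms of exponentials, encodes the boundary conditions~\eqref{eq5iv}, \eqref{eq5iva} as a linear system $Z(\gk)r(\gk)=0$ in the unknown coefficients, and then proves $Z(\gk)$ is invertible for some $\gk>0$ by showing that the entire function $\gk\mapsto\det Z(\gk)$ does not vanish identically (via the auxiliary lemma~\ref{lem_5_7} and an argument on the negative real axis). Your argument instead exploits positivity: a maximum principle rules out interior extrema, and at a vertex the Wentzell condition together with the sign constraints on the inward derivatives forces $a_v=c_v=0$ and $u'(v_{l_0})=0$ along some edge $l_0$, after which the explicit $\cosh$ solution produces a value exceeding the maximum. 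Your approach is shorter, avoids the matrix formalism and lemma~\ref{lem_5_7} entirely, and works directly for every $\gl>0$; the paper's route, while heavier, sets up explicit linear-algebraic machinery (the matrices $\hat Z_\pm(\gk)$, $Z(\gk)$) that is of independent use in the companion papers for computing resolvents.
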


\begin{proof}[Proof of lemma~\ref{lem_5_5}]
Our proof follows the one in~\cite[Chapter~6.1]{Kn81} quite closely --- actually, it
is sufficient to consider a special case of the proof given there.

We show that for every vertex $v\in V$ there are constants $a_v\in [0,1)$,
$b_{v_l}\in [0,1]$, $l\in\cL(v)$, $c_v\in [0,1]$ satisfying~\eqref{eq5iii},
and such that all $f$ in the domain $\cD(A)$ of the generator satisfy the boundary
condition~\eqref{eq5iv}. To this end, we let $f\in\cD(A)$, fix a vertex $v\in V$,
and compute $A f(v)$. Let us consider the three cases for $\gb$ mentioned in
section~\ref{sect_3}.

If $\gb=0$, $v$ is a trap, and $U_t f(v) = f(v)$ for all $t\ge 0$. Consequently,
$A f(v)=0$, and therefore $1/2\, f''(v)=0$. Thus $f$ satisfies the boundary
condition~\eqref{eq5iv} at $v$ with $a_v=0$, $c_v=1$, and $b_{v_l}=0$ for all
$l\in\cL(v)$.

Next we consider the case where $\gb\in (0,+\infty)$, i.e., $v$ is exponentially
holding. We know from the discussion in section~\ref{sect_3} that then after
expiration of the holding time the process jumps directly to the cemetery state.
Therefore we get for $t>0$, $U_t f(v) = \exp(-\gb t) f(v)$, and thus $A f(v) + \gb
f(v) = 0$, and the boundary condition~\eqref{eq5iv} holds for the choice
\begin{equation}    \label{eq_5_2}
   a_v = \frac{\gb}{1+\gb},\quad c_v=\frac{1}{1+\gb},\quad
            b_{v_l} = 0,\,l\in\cL(v).
\end{equation}

Finally we consider the case that $\gb=+\infty$, i.e., the process leaves $v$
immediately, and in particular, $v$ is not a trap. Therefore we may compute $A f(v)$
in Dynkin's form, e.g., \cite[p.~140, ff.]{Dy65a}, \cite[p.~99]{ItMc74}. As
in~\eqref{eq_4_3} we let $H_{v,\gep}$ denote the hitting time of the complement of
$B_\gep(v)$. Then
\begin{equation}    \label{eq_5_3}
   A f(v) = \limep \frac{E_v\Bigl(f\bigl(X(H_{v,\gep})\bigr)\Bigr)
                                                -f(v)}{E_v(H_{v,\gep})}.
\end{equation}
Recall the notation $f_l(\gep)$ for $f(\xi)$ with $\xi\in\cG$ having local coordinates
$(l,\gep)$, $l\in\cL$, $\gep\in I_l$. Then we get
\begin{align*}
    E_v\Bigl(f\bigl(X(H_{v,\gep})\bigr)\Bigr)
        &= \sum_{l\in\cL(v)} f_l(\gep)\,P_v\bigl(X(H_{v,\gep})\in l\bigr)
            + f(\gD)\,P_v\bigl(X(H_{v,\gep})=\gD\bigr)\nonumber\\
        &= \sum_{l\in\cL(v)} f_l(\gep)\,P_v\bigl(X(H_{v,\gep})\in l\bigr),
\end{align*}
where the last equality follows from $f(\gD)=0$. Let us denote
\begin{align*}
    r_l(\gep)   &= \frac{P_v\bigl(X(H_{v,\gep})\in l\bigr)}{E_v(H_{v,\gep})},
                        \qquad l\in\cL(v),\\[1ex]
    r_\gD(\gep) &= \frac{P_v\bigl(X(H_{v,\gep})=\gD\bigr)}{E_v(H_{v,\gep})},\\[1ex]
    K(\gep)     &= 1 + r_\gD(\gep) + \gep \sum_{l\in\cL(v)} r_l(\gep).
\end{align*}
The continuity of the paths of $X$ up to the lifetime $\zeta$ yields
\begin{equation*}
    \sum_{l\in\cL(v)} P_v\bigl(X(H_{v,\gep})\in l\bigr)
                            + P_v\bigl(X(H_{v,\gep})=\gD\bigr)=1,
\end{equation*}
and therefore equation~\eqref{eq_5_3} can be rewritten as
\begin{equation*}
    \limep\Bigl(A f(v) + r_\gD(\gep) f(v)
        - \sum_{l\in\cL(v)} r_l(\gep) \bigl(f_l(\gep)-f(v)\bigr)\Bigr)=0.
\end{equation*}
Since for all $\gep>0$, $K(\gep)^{-1}\le 1$, it follows that
\begin{equation*}
    \limep\Bigl(\frac{1}{K(\gep)}\,A f(v) + \frac{r_\gD(\gep)}{K(\gep)}\,f(v)
        - \sum_{l\in\cL(v)} \frac{\gep\, r_l(\gep)}{K(\gep)}\,
                \frac{f_l(\gep)-f(v)}{\gep}\Bigr)=0,
\end{equation*}
which by lemma~\ref{lem_5_2} we may rewrite as
\begin{equation*}
    \limep\Bigl(a_v(\gep) f(v) + \frac{1}{2}\,c_v(\gep) f''(v)
        - \sum_{l\in\cL(v)} b_{v_l}(\gep)\,\frac{f_l(\gep)-f(v)}{\gep}\Bigr)=0,
\end{equation*}
where we have introduced the non-negative quantities
\begin{align*}
    a_v(\gep)       &= \frac{r_\gD(\gep)}{K(\gep)},\\
    c_v(\gep)       &= \frac{1}{K(\gep)},\\
    b_{v_l}(\gep)   &= \frac{\gep\, r_l(\gep)}{K(\gep)},\qquad l\in\cL(v).
\end{align*}
Observe that for every $\gep>0$,
\begin{equation*}
    a_v(\gep) + c_v(\gep) + \sum_{l\in\cL(v)} b_{v_l}(\gep) =1.
\end{equation*}
Therefore every sequence $(\gep_n,\,n\in\N)$ with $\gep_n>0$ and $\gep_n\da 0$ has a
subsequence so that $a_v(\gep)$, $c_v(\gep)$ and $b_{v_l}(\gep)$, $l\in\cL(v)$,
converge along this subsequence to numbers $a_v$, $c_v$, and $b_{v_l}$ respectively in
$[0,1]$, and the relation~\eqref{eq5iii} holds true. From the remark after
definition~\ref{def_5_1} it follows that
\begin{equation*}
    \frac{f_l(\gep)-f(v)}{\gep}
\end{equation*}
converges with $\gep\da 0$ to $f'(v_l)$, and therefore we obtain that for every
vertex $v\in V$, $f\in\cD(A)$ satisfies the boundary condition~\eqref{eq5iv} with data
$a$, $b$, $c$ as in~\eqref{eq5ii}, \eqref{eq5iii}.
\end{proof}

Before we can prove lemma~\ref{lem_5_6} we have to introduce some additional
formalism.

For given data $a$, $b$, $c$ as in~\eqref{eq5ii}, \eqref{eq5iii}, it will be convenient
to consider $\cH_{a,b,c}$ equivalently as being the subspace of
$\Coo$ so that for its elements $f$ at every $v\in V$ the boundary
conditions~\eqref{eq5iv} as well as the boundary condition
\begin{equation}    \label{eq5iva}
    f''(v_l) = f''(v_k),\qquad     \text{for all $l,\,k\in\cL(v)$}
\end{equation}
hold true. Relation~\eqref{eq5iva} is just another way to express that $f''$ is
continuous on $\cG$.

We consider the sets $V$, $\cE$, and $\cI$ as being ordered in some arbitrary
way. With the convention that in $\cL$ the elements of $\cE$ come first this
induces also an order relation on $\cL$.

Suppose that $f\in\Coo$. With the given ordering of $\cE$ and $\cI$ we define the
following column vectors of length $|\cE|+2|\cI|$:
\begin{align*}
    f(V)    &= \Bigl(\bigl(f_e(0),\,e\in\cE\bigr),\bigl(f_i(0),\,i\in\cI\bigr),
                                        \bigl(f_i(\rho_i)\,i\in\cI\bigr)\Bigr)^t,\\
    f'(V)   &= \Bigl(\bigl(f'_e(0),\,e\in\cE\bigr),\bigl(f'_i(0),\,i\in\cI\bigr),
                                        \bigl(-f'_i(\rho_i)\,i\in\cI\bigr)\Bigr)^t,\\
    f''(V)  &= \Bigl(\bigl(f''_e(0),\,e\in\cE\bigr),\bigl(f''_i(0),\,i\in\cI\bigr),
                                        \bigl(f''_i(\rho_i)\,i\in\cI\bigr)\Bigr)^t,
\end{align*}
where the superscript ``$t$'' indicates transposition.

We want to write the boundary conditions~\eqref{eq5iv}, \eqref{eq5iva} in a compact
way, and to this end we introduce the following order relation on $V_\cL$: For $v_l$,
$v'_{l'}\in V_\cL$ we set $v_l \preceq v'_{l'}$ if and only if $v \prec v'$ or $v =
v'$ and $l\preceq l'$ (where for $V$ and $\cL$ we use the order relations introduced
above). For $f$ as above set
\begin{align*}
    \tilde f(V)   &= \bigl(f(v_l),\,v_l\in V_\cL\bigr)^t,\\
    \tilde f'(V)  &= \bigl(f'(v_l),\,v_l\in V_\cL\bigr)^t,\\
    \tilde f''(V) &= \bigl(f''(v_l),\,v_l\in V_\cL\bigr)^t.
\end{align*}
Then there exists a permutation matrix $P$ so that
\begin{equation*}
    \tilde f(V) = P f(V),\qquad \tilde f'(V) = P f'V),\qquad \tilde f''(V) = P f''(V).
\end{equation*}
In particular, $P$ is an orthogonal matrix which has in every row and in every
column exactly one entry equal to one while all other entries are zero.

For every $v\in V$ we define the following $|\cL(v)|\times|\cL(v)|$ matrices:
\begin{align*}
    \tilde A(v) &= \begin{pmatrix}
                    a_v    & 0      & 0      & \cdots & 0\\
                    0      & 0      & 0      & \cdots & 0\\
                    \vdots & \vdots & \vdots & \ddots & \vdots\\
                    0      & 0      & 0      & \cdots & 0
                 \end{pmatrix},\\[2ex]
    \tilde B(v) &= \begin{pmatrix}
                    -b_{v_{l_1}} & -b_{v_{l_2}} & -b_{v_{l_3}} & \cdots & -b_{v_{l_{|\cL(v)|}}}\\
                    0            & 0            & 0            & \cdots & 0\\
                    \vdots       & \vdots       & \vdots       & \ddots & \vdots\\
                    0            & 0            & 0            & \cdots & 0
                 \end{pmatrix},\\[2ex]
    \tilde C(v) &= \begin{pmatrix}
                    1/2\,c_v & 0      & 0      & 0      & \cdots & 0      \\
                    1        & -1     & 0      & 0      & \cdots & 0      \\
                    0        & 1      & -1     & 0      & \cdots & 0      \\
                    0        & 0      & 1      & -1     & \cdots & 0      \\
                    \vdots   & \vdots & \ddots & \ddots & \ddots & \vdots \\
                    0        & 0      & 0      & 0      & \cdots & -1
                 \end{pmatrix},
\end{align*}
where we have labeled the elements in $\cL(v)$ in such a way that in the above
defined ordering we have $l_1\prec l_2 \prec \dotsb \prec l_{|\cL(v)|}$. Observe
that $\tilde C(v)$ is invertible if and only if $c_v\ne 0$. Define block matrices
$\tilde A$, $\tilde B$, and $\tilde C$ by
\begin{equation*}
    \tilde A = \bigoplus_{v\in V} A(v),\quad \tilde B = \bigoplus_{v\in V} B(v),
            \quad \tilde C = \bigoplus_{v\in V} C(v).
\end{equation*}
Then we can write the boundary conditions~\eqref{eq5iv}, \eqref{eq5iva} simultaneously for
all vertices as
\begin{equation}    \label{bch}
    \tilde A \tilde f(V) + \tilde B \tilde f'(V) + \tilde C \tilde f''(V) = 0.
\end{equation}
Consequently the boundary conditions can equivalently be written in the form
\begin{equation}    \label{bc}
    Af(V) + Bf'(V) + Cf''(V) = 0,
\end{equation}
with
\begin{equation}
    A = P^{-1}\tilde A P,\quad B = P^{-1}\tilde B P,\quad C = P^{-1}\tilde C P.
\end{equation}

We bring in the following two matrix-valued functions on the complex plane
\begin{equation}    \label{Zpm}
    \hat Z_\pm (\gk) = A \pm \gk B + \gk^2 C,\qquad \gk\in\C.
\end{equation}

\begin{lemma}   \label{lem_5_7}
There exists $R>0$ so that for all $\gk\in\C$ with $|\gk|\ge R$ the matrices $\hat
Z_\pm(\gk)$ are invertible, and there are constants $C$, $p>0$ so that
\begin{equation}    \label{invnorm}
    \|\hat Z_\pm(\gk)^{-1}\|\le C\,|\gk|^p ,\qquad |\gk|\ge R.
\end{equation}
\end{lemma}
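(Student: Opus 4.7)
Since $P$ is a permutation matrix it is orthogonal, hence conjugation preserves operator norms: $\|\hat Z_\pm(\gk)^{-1}\| = \|\tilde Z_\pm(\gk)^{-1}\|$, where $\tilde Z_\pm(\gk) := \tilde A \pm \gk\tilde B + \gk^2 \tilde C$. The block-diagonal structure of $\tilde A$, $\tilde B$, $\tilde C$ shows $\tilde Z_\pm(\gk) = \bigoplus_{v\in V} Z_\pm(\gk,v)$, where each block $Z_\pm(\gk,v) := \tilde A(v) \pm \gk\tilde B(v) + \gk^2 \tilde C(v)$ is of size $|\cL(v)|\times|\cL(v)|$. Thus it suffices to prove invertibility and a polynomial norm bound for each vertex block separately.

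Fix $v\in V$ and set $n:=|\cL(v)|$. Inspection of $\tilde A(v)$, $\tilde B(v)$, $\tilde C(v)$ shows that only the first row of $Z_\pm(\gk,v)$ carries entries below order $\gk^2$, while rows $2,\dots,n$ come exclusively from $\gk^2\tilde C(v)$ and form a bidiagonal block with $\gk^2$ on the subdiagonal and $-\gk^2$ on the diagonal. Adding columns $2,\dots,n$ to column $1$ therefore zeros out column $1$ below the first entry (the row sums of that lower block vanish), and expanding along column $1$ then yields
\begin{equation*}
  \det Z_\pm(\gk,v) = (-1)^{n-1}\gk^{2(n-1)}\Bigl(a_v + \tfrac{1}{2} c_v \gk^2 \mp \gk\sum_{l\in\cL(v)} b_{v_l}\Bigr).
\end{equation*}

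By the normalization~\eqref{eq5iii} together with the requirement $a_v<1$ from~\eqref{eq5ii}, the scalar factor is a \emph{nonconstant} polynomial in $\gk$: either $c_v>0$ (degree two), or $c_v=0$ in which case $\sum_l b_{v_l}=1-a_v>0$ (degree one). Hence there exist $R_v,\gamma>0$ with $|\det Z_\pm(\gk,v)|\ge \gamma|\gk|^{2n-1}$ for every $|\gk|\ge R_v$, proving invertibility there. For the norm estimate I appeal to $M^{-1}=(\det M)^{-1}\operatorname{adj}(M)$: every entry of $Z_\pm(\gk,v)$ is a polynomial in $\gk$ of degree at most $2$, so each cofactor has degree at most $2(n-1)$, giving $\|\operatorname{adj}(Z_\pm(\gk,v))\|=O(|\gk|^{2n-2})$ and therefore $\|Z_\pm(\gk,v)^{-1}\|=O(|\gk|^{-1})$. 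This is dominated by $C|\gk|^p$ for any $p>0$ once $|\gk|\ge 1$, and assembling the blocks and invoking the permutation invariance completes the proof. The step deserving the most care is spotting the column operation that collapses the lower block; once that is in place, the constraint $a_v<1$ --- which is precisely what excludes the Dirichlet case of Remark~\ref{rem_5_4} --- provides the essential structural input that prevents the scalar factor from degenerating to a constant and so guarantees existence of the threshold $R$.
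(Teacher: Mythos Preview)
Your proof is correct and follows essentially the same route as the paper: reduction via the permutation matrix to the block-diagonal form, an explicit computation of $\det\bigl(\tilde A(v)\pm\gk\tilde B(v)+\gk^2\tilde C(v)\bigr)$ as $(-\gk^2)^{|\cL(v)|-1}$ times the quadratic scalar factor, and a cofactor/adjugate bound for the inverse. The only differences are cosmetic---you spell out the column operation and invoke $a_v<1$ to see the scalar factor is nonconstant (yielding the slightly sharper bound $\|Z_\pm(\gk,v)^{-1}\|=O(|\gk|^{-1})$), whereas the paper simply evaluates the scalar factor at $\gk=\pm1$ using~\eqref{eq5iii} to get nonvanishing and settles for $|\det|^{-1}\le\text{const}$.
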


\begin{remark}  \label{rem_5_8}
The bound~\eqref{invnorm} is actually rather crude, but sufficient for our purposes.
\end{remark}

\begin{proof}[Proof of lemma~\ref{lem_5_7}]
Since we have
\begin{equation}    \label{ZP}
    \hat Z_{\pm}(\gk) = P^{-1}\bigl(\tilde A \pm\gk \tilde B + \gk^2\tilde C\bigr) P
\end{equation}
for an orthogonal matrix $P$, for the proof of the first statement it suffices
to show that there exists $R>0$ such that
\begin{equation*}
    \tilde A \pm\gk \tilde B + \gk^2\tilde C
\end{equation*}
are invertible for complex $\gk$ outside of the open ball of radius $R$. For this in turn
it suffices to show that for every vertex $v\in V$ the matrices
\begin{equation*}
\begin{split}
    \tilde A(v) &\pm\gk \tilde B(v) + \gk^2\tilde C(v)\\
        &= \begin{pmatrix}
                    a_v \pm \gk b_{v_{l_1}}+ \gk^2/2\,c_v & \pm\gk b_{v_{l_2}} & \pm\gk b_{v_{l_3}}
                             & \pm\gk b_{v_{l_4}} & \cdots & \pm\gk b_{v_{l_{|\cL(v)|}}}      \\
                    \gk^2    & -\gk^2 & 0      & 0      & \cdots & 0      \\
                    0        & \gk^2  & -\gk^2 & 0      & \cdots & 0      \\
                    0        & 0      & \gk^2  & -\gk^2 & \cdots & 0      \\
                    \vdots   & \vdots & \vdots & \ddots & \ddots & \vdots \\
                    0        & 0      & 0      & 0      & \cdots & -\gk^2
          \end{pmatrix}
\end{split}
\end{equation*}
are invertible for all $\gk\in\C$ with $|\gk|\ge R$. An elementary calculation
gives
\begin{equation*}
    \det\bigl(\tilde A(v) \pm\gk \tilde B(v) + \gk^2\tilde C(v)\bigr)
        = \Bigl(a_v \pm \gk \sum_{l\in \cL(v)}b_{v_l} + \frac{\gk^2}{2}\,c_v\Bigr)
                \bigl(-\gk^2\bigr)^{|\cL(v)|-1}.
\end{equation*}
The choices $\gk=\pm 1$ together with condition~\eqref{eq5iii} show that the polynomial
of second order in $\gk$ in the first factor on the right hand side does not vanish identically.
Therefore, it is non-zero in the exterior of an open ball with some radius $R_v>0$.
Hence, we obtain the first statement for the choice $R = \max_{v\in V} R_v$.
Moreover, from the calculation of the determinants above we also get for every $v\in V$ and
all $\gk\in\C$ with $|\gk|\ge R$ an estimate of the form
\begin{equation}    \label{detinv}
    \bigl|\det\bigl(\tilde A(v) \pm\gk \tilde B(v) + \gk^2\tilde C(v)\bigr)\bigr|^{-1}
        \le \text{const.}
\end{equation}
Thus, using the co-factor formula for
\begin{equation*}
    \bigl(\tilde A(v) \pm\gk \tilde B(v) + \gk^2\tilde C(v)\bigr)^{-1}
\end{equation*}
we find with~\eqref{detinv} the estimate
\begin{equation*}
    \bigl\|\bigl(\tilde A(v) \pm\gk \tilde B(v) + \gk^2\tilde C(v)\bigr)^{-1}\bigr\|
        \le C_v |\gk|^{p_v},\qquad |\gk|\ge R,
\end{equation*}
for some constants $C_v$, $p_v>0$. Consequently we get
\begin{equation*}
    \bigl\|\bigl(\tilde A \pm\gk \tilde B + \gk^2\tilde C\bigr)^{-1}\bigr\|
        \le C |\gk|^p,\qquad |\gk|\ge R,
\end{equation*}
for some constants $C$, $p>0$, and by~\eqref{ZP} we have proved inequality~\eqref{invnorm}.
\end{proof}

With these preparations we can enter the

\begin{proof}[Proof of lemma~\ref{lem_5_6}]
Let the data $a$, $b$, $c$ be given as in\eqref{eq5ii}, \eqref{eq5iii}. We have to
show that the inclusion $\cD(A)\subset \cH_{a,b,c}$ is not strict. Let $R =
(R_\gl,\,\gl>0)$ be the resolvent of $A$. Then for every $\gl>0$, $R_\gl$
is a bijection from $C_0(\cG)$ onto $\cD(A)$, that is, $R_\gl^{-1}$ is a bijection from
$\cD(A)$ onto $C_0(\cG)$.

Assume to the contrary that the inclusion  $\cD(A)\subset \cH_{a,b,c}$ is strict.
We will derive a contradiction. For $\gl>0$ consider the linear mapping
$H_\gl:\,f\mapsto \gl f - 1/2 f''$
from $\cH_{a,b,c}$ to $C_0(\cG)$. On $\cD(A)$ this mapping coincides with $R^{-1}_\gl$, and
$R^{-1}_\gl$ is a bijection from $\cD(A)$ onto $C_0(\cG)$. Therefore our
assumption entails that $H_\gl$ cannot be injective. Hence for any $\gl>0$ there exists
$f(\gl)\in\cH_{a,b,c}$, $f(\gl)\ne 0$, with
\begin{equation}    \label{homeq}
    H_\gl f(\gl) = \gl f(\gl) - \frac{1}{2}\,f''(\gl) = 0.
\end{equation}
We will show that $f(\gl)\in\cH_{a,b,c}$ satisfying \eqref{homeq} can only hold when
$f(\gl)=0$ on $\cG$. It will be convenient to change the
variable $\gl$ to $\gk = \sqrt{2\gl}$, and there will be no danger of confusion that
we shall simply write $f(\gk)$ for $f(\gl)$ from now on. Then the solution
of~\eqref{homeq} is necessarily of the form given by
\begin{align}
    f_e(\gk,x) &= r_e(\gk)\,e^{-\gk x}  & e&\in\cE,\,x\in \R_+,\\
    f_i(\gk,x) &= r^+_i(\gk)\,e^{\gk x}+r^-_i(\gk)\,e^{\gk(\rho_i- x)}
                                        & i&\in\cI,\,x\in [0,\rho_i],
\end{align}
and we want to show that for some $\gk > 0$, the boundary conditions~\eqref{eq5iv}
and~\eqref{eq5iva} entail that $r_e(\gk) = r^+_i(\gk) = r^-_i(\gk)=0$ for all
$e\in\cE$, $i\in\cI$. For $\gk>0$, define a column vector $r(\gk)$ of length
$|\cE|+2|\cI|$ by
\begin{equation*}
    r(\gk) = \bigl((r_e(\gk),\,e\in\cE),(r^+_i(\gk),\,i\in\cI),(r^-_i(\gk),\,i\in\cI)\bigr)^t,
\end{equation*}
and introduce the $(|\cE|+2|\cI|)\times (|\cE|+2|\cI|)$ matrices
\begin{equation*}
    X_\pm(\gk)   = \begin{pmatrix}
                    \1 & 0                & 0                \\
                    0  & \1               & \pm e^{\gk \rho} \\
                    0  & \pm e^{\gk \rho} & \1
                   \end{pmatrix}
\end{equation*}
--- appropriately modified in case that $\cE$ or $\cI$ is the empty set --- with
the $|\cI|\times|\cI|$ diagonal matrices
\begin{equation*}
    e^{\gk \rho} = \diag{e^{\gk \rho_i},\,i\in\cI\bigr}.
\end{equation*}
Then the boundary conditions~\eqref{eq5iv}, \eqref{eq5iva} for $f(\gk)$ read
\begin{equation}    \label{Zr}
    Z(\gk)r(\gk) = 0,
\end{equation}
with
\begin{equation}    \label{defZ}
    Z(\gk) = (A+\gk^2C)X_+(\gk) + \gk BX_-(\gk).
\end{equation}
Thus, if we can show that for some $\gk > 0$ the matrix $Z(\gk)$ is invertible, the
proof of the theorem is finished. Note that the matrix-valued function $Z$ is
entire in $\gk$, and therefore so is its determinant. Thus, if can show that $\gk\mapsto \det
Z(\gk)$ does not vanish identically, then it can only vanish on a discrete subset of
the complex plane, and for $\gk$ in the complement of this set $Z(\gk)$ is
invertible. Write
\begin{equation*}
    X_\pm(\gk) = \1 \pm \gd X(\gk),
\end{equation*}
with
\begin{equation*}
    \gd X(\gk) = \begin{pmatrix}
                    0  & 0                & 0            \\
                    0  & 0                & e^{\gk \rho} \\
                    0  & e^{\gk \rho}     & 0
                   \end{pmatrix},
\end{equation*}
so that we can write
\begin{equation*}
    Z(\gk) = \hat Z_+(\gk)\bigl(\1+\gd Z(\gk)\bigr),
\end{equation*}
with
\begin{equation*}
    \gd Z(\gk) = \hat Z_+(\gk)^{-1}\, \hat Z_-(\gk)\,\gd X(\gk).
\end{equation*}
Observe that in case that $\cI=\emptyset$, we obtain $\gd Z(\gk)=0$, and in this
case the invertibility of $Z(\gk)$ for all $\gk$ with $\gk\ge R$ follows from
lemma~\ref{lem_5_7}. Hence we assume from now on that $\cI\ne \emptyset$.
Lemma~\ref{lem_5_7} provides us with the bound
\begin{equation*}
    \bigl\|\hat Z_+(\gk)^{-1}\hat Z_-(\gk)\bigr\| \le \text{const.}\,|\gk|^{q},
\end{equation*}
for all $\gk\in\C$, $|\gk|\ge R$, and for some $q>0$. On the other hand, we get
\begin{equation*}
    \|\gd X(\gk)\| \le e^{\gk \rho_0},
\end{equation*}
for all $\gk\le 0$ where $\rho_0 = \min_{i\in\cI} \rho_i$. Therefore, there exists a
constant $R'>0$ so that for all $\gk\le -R'$  we have $\|\gd Z(\gk)\|<1$, and therefore
for such $\gk$, $Z(\gk)$ is invertible, i.e., $\det Z(\gk)\ne 0$. Hence there also
exists $\gk > 0$ so that $Z(\gk)$ is invertible, and the proof is finished.
\end{proof}

\begin{appendix}
\section{Feller Semigroups and Resolvents}  \label{app_A}
In this appendix we give an account of the Feller property of semigroups and
resolvents. The material here seems to be quite well-known, and our presentation of
it owes very much to~\cite{Ra56}, most notably the inversion formula for the Laplace
transform, equation~\eqref{inv_L} in connection with lemma~\ref{lem_A_6}. On the
other hand, we were not able to locate a reference where the results are collected and
stated in the form in which we employ them in the present paper. Therefore we also
provide proofs for some of the statements.

Assume that $(E,d)$ is a locally compact separable metric space with Borel
$\gs$--algebra denoted by $\cB(E)$. $B(E)$ denotes the space of bounded measurable
real valued functions on $E$, $\Coe$ the subspace of continuous functions vanishing
at infinity. $B(E)$ and $\Coe$ are equipped with the sup-norm $\|\,\cdot\,\|$.

The following definition is as in~\cite{ReYo91}:
\begin{definition}  \label{def_A_1}
    A \emph{Feller semigroup} is a family $U=(U_t,\,t\ge 0)$ of positive
    linear operators on $\Coe$ such that
    \begin{enum_i}
        \item $U_0=\text{id}$ and $\|U_t\|\le 1$ for every $t\ge 0$;
        \item $U_{t+s} = U_t\comp U_s$ for every pair $s$, $t\ge 0$;
        \item $\lim_{t\downarrow 0} \|U_t f - f\|=0$ for every $f\in \Coe$.
    \end{enum_i}
\end{definition}

Analogously we define
\begin{definition}  \label{def_A_2}
    A \emph{Feller resolvent} is a family $R=(R_\gl,\,\gl>0)$ of positive
    linear operators on $\Coe$ such that
    \begin{enum_i}
        \item $\|R_\gl\|\le \gl^{-1}$ for every $\gl>0$;
        \item $R_\gl - R_\mu = (\mu-\gl) R_\gl\comp R_\mu$ for every
                pair $\gl$, $\mu>0$;
        \item $\lim_{\gl\to\infty}\|\gl R_\gl f - f\|=0$ for every $f\in\Coe$.
    \end{enum_i}
\end{definition}

In the sequel we shall focus our attention on semigroups $U$ and resolvents $R$
associated with an $E$--valued Markov process, and which are \emph{a priori} defined
on $B(E)$. (In our notation, we shall not distinguish between $U$ and $R$ as defined
on $B(E)$ and their restrictions to $\Coe$.)

Let $X = (X_t,\,t\ge 0)$ be a Markov process with state space $E$, and let
$(P_x,\,x\in E)$ denote the associated family of probability measures on some measurable
space $(\gO,\cA)$ so that $P_x(X_0=x) = 1$. $E_x(\,\cdot\,)$ denotes the
expectation with respect to $P_x$. We assume throughout that for every $f\in B(E)$
the mapping
\begin{equation*}
    (t,x) \mapsto E_x\bigl(f(X_t)\bigr)
\end{equation*}
is measurable from $\R_+\times E$ into $\R$. The semigroup $U$ and resolvent $R$
associated with $X$ act on $B(E)$ as follows. For $f\in B(E)$, $x\in E$, $t\ge 0$,
and $\gl>0$ set
\begin{align}
    U_t f(x)   &= E_x\bigl(f(X_t)\bigr),  \label{eq_A_1}\\
    R_\gl f(x) &= \int_0^\infty e^{-\gl t} U_t f (x)\,dt.  \label{eq_A_2}
\end{align}
Property~(i) of Definitions~\ref{def_A_1} and \ref{def_A_2} is obviously satisfied. The
semigroup property, (ii) in Definition~\ref{def_A_1}, follows from the Markov property
of $X$, and this in turn implies the resolvent equation, (ii) of
Definition~\ref{def_A_2}. Moreover, it follows also from the Markov property of $X$
that the semigroup and the resolvent commute. On the other hand, in general neither
the property that $U$ or $R$ map $\Coe$ into itself, nor the strong continuity
property (iii) in Definitions~\ref{def_A_1}, \ref{def_A_2} hold true on $B(E)$ or on
$\Coe$.

If $W$ is a subspace of $B(E)$ the resolvent equation shows that the image of $W$
under $R_\gl$ is independent of the choice of $\gl>0$, and in the sequel we shall
denote the image by $RW$. Furthermore, for simplicity we shall write $U\Coe\subset
\Coe$, if $U_t f\in\Coe$ for all $t\ge 0$, $f\in\Coe$.

\begin{theorem} \label{thm_A_3}
The following statements are equivalent:
\begin{enum_i}
    \item $U$ is Feller.
    \item $R$ is Feller.
    \item $U\Coe\subset\Coe$, and for all $f\in\Coe$, $x\in E$,
            $\lim_{t\downarrow 0} U_t f(x) = f(x)$.
    \item $U\Coe\subset\Coe$, and for all $f\in\Coe$, $x\in E$,
            $\lim_{\gl\rightarrow \infty} \gl R_\gl f(x) = f(x)$.
    \item $R\Coe\subset\Coe$, and for all $f\in\Coe$, $x\in E$,
            $\lim_{t\downarrow 0} U_t f(x) = f(x)$.
    \item $R\Coe\subset\Coe$, and for all $f\in\Coe$, $x\in E$,
            $\lim_{\gl\rightarrow \infty} \gl R_\gl f(x) = f(x)$.
\end{enum_i}
\end{theorem}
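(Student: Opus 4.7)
The plan is to establish the six-way equivalence as a cycle, exploiting two trivial bridges between the ``semigroup side'' (i), (iii), (iv) and the ``resolvent side'' (ii), (v), (vi). The implications $(i)\Rightarrow(iii)$ and $(ii)\Rightarrow(vi)$ are immediate, since norm convergence entails pointwise convergence and the mapping property is built into the Feller definitions. For $(iii)\Rightarrow(iv)$ and, in parallel, $(v)\Rightarrow(vi)$, I would rewrite $\gl R_\gl f(x)=\int_0^\infty e^{-s}U_{s/\gl}f(x)\,ds$ and apply dominated convergence using $|U_{s/\gl}f(x)|\le\|f\|$ to conclude $\gl R_\gl f(x)\to f(x)$ pointwise; simultaneously \eqref{eq_A_2} exhibits $R_\gl f$ as a norm-Bochner integral of $\Coe$-valued functions majorised by the integrable weight $e^{-\gl t}\|f\|$, which forces $R_\gl f\in\Coe$.

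The crux is the reverse passage, from information about $R$ to information about $U$, which is precisely what the Laplace inversion formula \eqref{inv_L} and the companion lemma \ref{lem_A_6} are designed to accomplish. Starting from $(vi)$, I would invoke \eqref{inv_L} to express $U_t f(x)$ as a pointwise limit of combinations of the resolvent iterates $R_\gl^{n+1} f(x)$; since $R_\gl$ preserves $\Coe$, each approximant already lies in $\Coe$, and lemma \ref{lem_A_6} would then be used to show both that the limit remains in $\Coe$ and that the hypothesis $\gl R_\gl f(x)\to f(x)$ translates into $U_t f(x)\to f(x)$ as $t\downarrow 0$. This yields $(vi)\Rightarrow(v)$, and running the same inversion from $(iv)$ yields $(iv)\Rightarrow(iii)$.

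It remains to close the cycle by upgrading pointwise to norm convergence, i.e.\ $(iii)\Rightarrow(i)$ and $(vi)\Rightarrow(ii)$. On the semigroup side I would first establish strong continuity on the dense subspace $R\Coe$ via the identity
$$U_t R_\mu h - R_\mu h = (e^{\mu t}-1)\,R_\mu h - e^{\mu t}\int_0^t e^{-\mu s}\,U_s h\,ds,$$
whose right-hand side has norm at most $2(e^{\mu t}-1)\|h\|/\mu$ and therefore tends to zero uniformly in $h$ on bounded sets. Norm density of $R\Coe$ in $\Coe$ would be deduced from the pointwise convergence $\gl R_\gl f(x)\to f(x)$ together with equicontinuity on compact sets (supplied by $R_\gl\Coe\subset\Coe$ and $\|\gl R_\gl\|\le 1$) and a standard cut-off exploiting that $f$ itself vanishes at infinity; combined with $\|U_t\|\le 1$ this extends strong continuity from $R\Coe$ to all of $\Coe$. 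The analogous manipulation on the resolvent side handles $(vi)\Rightarrow(ii)$.

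The step I expect to be the main obstacle is the use of the inversion formula: extracting both membership $U_t f\in\Coe$ and the correct right-limit at $t=0$ from the rather weak hypothesis that $\gl R_\gl f(x)\to f(x)$ only \emph{pointwise} is delicate, and it is here that lemma \ref{lem_A_6} must do the heavy lifting. Everything else is either routine manipulation of the resolvent equation and Laplace integrals, or the standard ``compact set plus tail'' argument for upgrading pointwise convergence to uniform convergence in $\Coe$.
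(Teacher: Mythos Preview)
Your overall architecture is close to the paper's, but there are two genuine gaps in the step you yourself flag as delicate, the passage from the resolvent side back to the semigroup side.

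First, lemma~\ref{lem_A_6} is stated only for $f\in RB(E)$, not for all $f\in\Coe$: the uniform convergence $U^\gl_t f\to U_t f$ is guaranteed only on the range of the resolvent. So you cannot invoke it directly under hypothesis~(vi) to conclude $U_t f\in\Coe$ for an arbitrary $f\in\Coe$; you must first know that $R\Coe$ is dense in $\Coe$, apply the inversion on that dense set, and then extend by the contraction property. The paper does exactly this ordering: it proves density (lemma~\ref{lem_A_8}) \emph{before} using the inversion in lemma~\ref{lem_A_7}. (Incidentally, formula~\eqref{inv_L} involves $R_{n\gl}$, not iterates $R_\gl^{n+1}$; this is a different inversion scheme from the Post--Widder formula you seem to have in mind.)

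Second, and more seriously, your density argument does not go through as written. The hypotheses $R_\gl\Coe\subset\Coe$ and $\|\gl R_\gl\|\le 1$ do \emph{not} supply equicontinuity of the family $(\gl R_\gl f)_{\gl>0}$ on compact sets: a uniformly bounded family of continuous functions need not be equicontinuous (think of $\sin(\gl x)$ on $[0,1]$), and nothing in those two hypotheses rules this out. The paper avoids this trap by proving density via duality: if $R\Coe$ were not dense, Hahn--Banach and Riesz representation would produce a nonzero finite signed measure $\mu$ with $\int \gl R_\gl f\,d\mu=0$ for all $\gl$ and $f$; since $|\gl R_\gl f|\le\|f\|$ pointwise, dominated convergence and the pointwise hypothesis $\gl R_\gl f(x)\to f(x)$ give $\int f\,d\mu=0$ for every $f\in\Coe$, a contradiction. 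Once density is secured this way, the rest of your plan (strong continuity on $R\Coe$ via the identity you wrote, then extension by density and contraction) matches the paper's lemma~\ref{lem_A_5} argument and is correct.
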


We prepare a sequence of lemmas. The first one follows directly from the
dominated convergence theorem:

\begin{lemma}   \label{lem_A_4}
Assume that for $f\in B(E)$, $U_t f \rightarrow f$ as $t\downarrow 0$. Then $\gl
R_\gl f \rightarrow f$ as $\gl\to+\infty$.
\end{lemma}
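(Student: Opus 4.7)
The plan is to rescale the Laplace transform defining $R_\gl$ and then apply the dominated convergence theorem, exactly as the paper suggests.

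Starting from the definition~\eqref{eq_A_2}, for each $x\in E$ and $\gl>0$ the change of variables $s=\gl t$ gives
\begin{equation*}
    \gl R_\gl f(x) = \gl\int_0^\infty e^{-\gl t}\, U_t f(x)\,dt
                   = \int_0^\infty e^{-s}\, U_{s/\gl} f(x)\,ds.
\end{equation*}
For each fixed $s>0$ we have $s/\gl\da 0$ as $\gl\to+\infty$, and so by hypothesis $U_{s/\gl}f(x)\to f(x)$. The integrand is bounded pointwise in $s$ by the integrable function $e^{-s}\|f\|$, since $|U_t f(x)|\le\|f\|$ for all $t\ge 0$. Dominated convergence then yields
\begin{equation*}
    \lim_{\gl\to+\infty}\gl R_\gl f(x) = \int_0^\infty e^{-s} f(x)\,ds = f(x),
\end{equation*}
which is the desired pointwise convergence. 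If the hypothesis is meant in the sup-norm (as in item~(iii) of definition~\ref{def_A_1}), the very same estimate is uniform in $x$, so one obtains $\|\gl R_\gl f - f\|\to 0$ by applying dominated convergence to the scalar integral $\int_0^\infty e^{-s}\|U_{s/\gl}f-f\|\,ds$.

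There is essentially no obstacle: the only thing to verify is the uniform integrable bound on the rescaled integrand, which follows from the contractivity $\|U_t\|\le 1$ built into the definition of the semigroup associated with a Markov process.
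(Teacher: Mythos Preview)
Your argument is correct and is precisely the standard application of the dominated convergence theorem that the paper invokes (the paper gives no details beyond that phrase). The change of variables $s=\gl t$ and the uniform bound $|U_{s/\gl}f(x)|\le\|f\|$ are exactly the ingredients needed, and your remark covering both the pointwise and sup-norm readings of the hypothesis is appropriate since the paper uses both variants later in the appendix.
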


\begin{lemma}   \label{lem_A_5}
The semigroup $U$ is strongly continuous on $RB(E)$.
\end{lemma}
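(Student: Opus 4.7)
The plan is to write any element of $RB(E)$ as $g = R_\lambda f$ for some $\lambda>0$ and $f \in B(E)$, and then derive a clean formula for $U_s g$ in terms of $g$ itself plus a small error. Specifically, I would use the semigroup property $U_s \comp U_t = U_{s+t}$ (which holds on $B(E)$ by the Markov property of $X$) together with Fubini, which is justified since $f$ is bounded and the exponential kernel is integrable. This gives
\begin{equation*}
    U_s R_\gl f(x) = \int_0^\infty e^{-\gl t}\,U_{s+t} f(x)\,dt
        = e^{\gl s}\int_s^\infty e^{-\gl u}\,U_u f(x)\,du.
\end{equation*}

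Next I would split that last integral as $\int_0^\infty - \int_0^s$ and rearrange to obtain
\begin{equation*}
    U_s R_\gl f - R_\gl f = (e^{\gl s}-1)\,R_\gl f
        - e^{\gl s}\int_0^s e^{-\gl u}\,U_u f\,du.
\end{equation*}
Taking sup norms, using $\|R_\gl f\|\le \gl^{-1}\|f\|$ from~\eqref{eq_2_5} and $\|U_u f\|\le \|f\|$ for each $u$, I would bound
\begin{equation*}
    \|U_s R_\gl f - R_\gl f\|
        \le \frac{e^{\gl s}-1}{\gl}\,\|f\| + s\,e^{\gl s}\,\|f\|,
\end{equation*}
which tends to zero as $s\downarrow 0$. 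Since $g = R_\gl f$ was an arbitrary element of $RB(E)$ (and the image $RB(E)$ is independent of $\gl>0$ by the resolvent equation, as noted just before the lemma), this proves strong continuity of $U$ on $RB(E)$.

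The main thing to be careful about is the interchange of $U_s$ with the Laplace integral defining $R_\gl$. Since $f\in B(E)$ is bounded and the map $(t,x)\mapsto U_t f(x)$ is measurable by the standing assumption in the appendix, the iterated integral $\int_0^\infty e^{-\gl t} E_x(U_t f(X_s))\,dt$ is finite, and Fubini applies to yield the identity above; everything else is a one-line estimate. No finer structure of the process is needed for this lemma.
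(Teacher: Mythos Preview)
Your proof is correct and follows essentially the same route as the paper's: both compute $U_s R_\gl f$ via Fubini and the substitution $u=s+t$, then rearrange and bound the difference by elementary estimates. The only cosmetic differences are that the paper splits the integral as $(e^{\gl s}-1)\int_s^\infty - \int_0^s$ rather than your $(e^{\gl s}-1)R_\gl f - e^{\gl s}\int_0^s$, obtaining the slightly sharper bound $\tfrac{2}{\gl}(1-e^{-\gl s})\|f\|$, and it remarks explicitly that strong continuity at general $t>0$ then follows from the semigroup property and the commutation of $U$ with $R$.
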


\begin{proof}
If strong continuity at $t=0$ has been shown, strong continuity at $t>0$ follows
from the semigroup property of $U$, and the fact that $U$ and $R$ commute. Therefore
it is enough to show strong continuity at $t=0$.

Let $f\in B(E)$, $\gl>0$, $t>0$, and consider for $x\in E$ the following computation
\begin{align*}
    U_t R_\gl f(x) &- R_\gl f(x)\\
        &= \int_0^\infty e^{-\gl s} E_x\bigl(f(X_{t+s})\bigr)\,ds
                - \int_0^\infty e^{-\gl s} E_x\bigl(f(X_s)\bigr)\,ds\\
        &= e^{\gl t} \int_t^\infty e^{-\gl s} E_x\bigl(f(X_s)\bigr)\,ds
                - \int_0^\infty e^{-\gl s} E_x\bigl(f(X_s)\bigr)\,ds\\
        &= \bigl(e^{\gl t}-1\bigr) \int_t^\infty e^{-\gl s} E_x\bigl(f(X_s)\bigr)\,ds
                - \int_0^t e^{-\gl s} E_x\bigl(f(X_s)\bigr)\,ds\\
\end{align*}
where we used Fubini's theorem and the Markov property of $X$. Thus we get the
following estimation
\begin{align*}
    \bigl\|U_t R_\gl f - R_\gl f\bigr\|
        &\le \biggl(\bigl(e^{\gl t} - 1\bigr)\int_t^\infty e^{-\gl s}\,ds
                +\int_0^t e^{-\gl s}\,ds\biggl)\, \|f\|\\
        &= \frac{2}{\gl}\, \bigl(1-e^{-\gl t}\bigr)\,\|f\|,
\end{align*}
which converges to zero as $t$ decreases to zero.
\end{proof}

For $\gl>0$, $t\ge 0$, $f\in B(E)$, $x\in E$ set
\begin{equation}    \label{inv_L}
    U^\gl_t f(x) = \sum_{n=1}^\infty \frac{(-1)^{n+1}}{n!}
                        \,n\gl\, e^{n\gl t}\, R_{n\gl} f(x).
\end{equation}
Observe that, because of $n\gl\|R_{n\gl} f\| \le \|f\|$, the last sum converges in
$B(E)$.

For the proof of the next lemma we refer the reader
to~\cite[p.~477~f]{Ra56}:

\begin{lemma}   \label{lem_A_6}
For all $t\ge 0$, $f\in RB(E)$, $U^\gl_t f$ converges in $B(E)$ to $U_t f$ as
$\gl$ tends to infinity.
\end{lemma}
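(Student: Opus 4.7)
The plan is to rewrite the defining series as an integral transform of the orbit $s\mapsto U_s f$ against a positive kernel that concentrates at $s=t$ as $\gl\to\infty$, and then to invoke the strong continuity of $U$ on $RB(E)$ furnished by Lemma~\ref{lem_A_5}. The kernel will turn out to be a rescaled Gumbel density, which makes the approximation-of-identity structure of \eqref{inv_L} transparent.

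First I would substitute the Laplace representation $R_{n\gl} f(x) = \int_0^\infty e^{-n\gl s} U_s f(x)\,ds$ into the series. For each $N\in\N$, Fubini is trivial in the finite sum and yields
\begin{equation*}
    \sum_{n=1}^N \frac{(-1)^{n+1}}{n!}\, n\gl\, e^{n\gl t}\, R_{n\gl} f(x)
        = \int_0^\infty S_N(s)\,U_s f(x)\,ds,
\end{equation*}
where $S_N(s) = \sum_{n=1}^N \frac{(-1)^{n+1}}{n!}\,n\gl\,e^{-n\gl(s-t)}$. Writing $y = e^{-\gl(s-t)}$, a short manipulation gives $S_N(s) = \gl y \sum_{k=0}^{N-1}(-y)^k/k!$, whose pointwise limit as $N\to\infty$ is
\begin{equation*}
    K_\gl(s) = \gl\, e^{-\gl(s-t)}\,\exp\bigl(-e^{-\gl(s-t)}\bigr).
\end{equation*}

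The technical heart of the proof is justifying the interchange of the limit $N\to\infty$ with the integral to obtain $U^\gl_t f(x) = \int_0^\infty K_\gl(s)\,U_s f(x)\,ds$. A direct Fubini on the original double series fails, because for $s<t$ the summands $n\gl e^{n\gl(t-s)}$ blow up in $n$. I would apply dominated convergence, splitting $[0,\infty) = [0,t]\cup(t,\infty)$. On $(t,\infty)$ one has $y\le 1$, and the alternating-series estimate $|\sum_{k=0}^{N-1}(-y)^k/k!|\le 1$ yields $|S_N(s)|\le \gl e^{-\gl(s-t)}$, which is integrable. On the compact interval $[0,t]$ the crude bound $|\sum_{k=0}^{N-1}(-y)^k/k!|\le e^y$ gives $|S_N(s)|\le \gl e^{-\gl(s-t)}\exp(e^{-\gl(s-t)})$, a bounded continuous function of $s$ for fixed $\gl,t$ and hence integrable over the compact interval. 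Combined with $|U_s f(x)|\le\|f\|$ these estimates provide an integrable majorant and force the desired interchange.

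Finally, to let $\gl\to\infty$, I would change variables $u = \gl(s-t)$, obtaining
\begin{equation*}
    U^\gl_t f = \int_{-\gl t}^\infty e^{-u}\exp\bigl(-e^{-u}\bigr)\,U_{t+u/\gl} f\,du.
\end{equation*}
The substitution $v = e^{-u}$ shows that $\int_{-\infty}^\infty e^{-u}\exp(-e^{-u})\,du = \int_0^\infty e^{-v}\,dv = 1$, and the Gumbel weight decays super-exponentially as $u\to-\infty$ and exponentially as $u\to+\infty$. Given $\eta>0$, choose $M$ so that $2\|f\|\int_{|u|>M} e^{-u}\exp(-e^{-u})\,du<\eta/2$; on $|u|\le M$ the hypothesis $f\in RB(E)$ together with Lemma~\ref{lem_A_5} (strong continuity of $s\mapsto U_s f$ in $B(E)$) gives $\sup_{|u|\le M}\|U_{t+u/\gl} f - U_t f\|<\eta/2$ for all $\gl$ sufficiently large. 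Combining these bounds with the normalization $\int_{-\gl t}^\infty e^{-u}\exp(-e^{-u})\,du\to 1$ yields $\|U^\gl_t f - U_t f\|<\eta$. The main obstacle is the dominated-convergence step in the previous paragraph: the absence of uniform-in-$n$ control of $S_N(s)$ on $[0,t]$ means the alternating-series cancellation that works on $[t,\infty)$ must be replaced there by a separate, crude, but sufficient compact-interval bound.
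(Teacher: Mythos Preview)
The paper does not supply its own proof of this lemma; it defers to Ray~\cite[p.~477~f]{Ra56}. Your argument is therefore self-contained where the paper is not, and for $t>0$ it is correct: the identification of the kernel $K_\gl$ as a scaled Gumbel density makes the approximation-of-identity structure transparent, the dominated-convergence justification of the interchange is sound (the alternating-series bound on $(t,\infty)$ works because $y^k/k!$ is monotone in $k$ for $0\le y\le 1$, and on the compact interval $[0,t]$ the crude exponential majorant is finite and hence integrable), and the final step correctly invokes the strong continuity of $s\mapsto U_s f$ on $RB(E)$ furnished by Lemma~\ref{lem_A_5}.

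There is one slip. Your normalization claim $\int_{-\gl t}^\infty e^{-u}\exp(-e^{-u})\,du\to 1$ is false at $t=0$: the lower limit stays at $0$ for every $\gl$, and the integral equals $\int_0^1 e^{-v}\,dv = 1-e^{-1}$. In fact the lemma as stated appears to fail at $t=0$ altogether: for the trivial semigroup $U_s=\text{id}$ one has $R_\mu f=f/\mu$, whence $U^\gl_0 f=(1-e^{-1})f$ for every $\gl$, which does not converge to $U_0 f=f$. This does no harm to the paper, since the only place Lemma~\ref{lem_A_6} is invoked (in the proof of Lemma~\ref{lem_A_7}) takes $t>0$ explicitly; you should simply restrict your argument to $t>0$ and note that the boundary case is neither needed nor true.
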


\begin{lemma}   \label{lem_A_7}
If $U_t\Coe \subset \Coe$ for all $t\ge 0$, then $R_\gl\Coe\subset \Coe$, for all $\gl>0$.
If $R_\gl\Coe\subset \Coe$, for some $\gl>0$, and $R_\gl\Coe$ is dense in $\Coe$, then
$U_t\Coe \subset \Coe$ for all $t\ge 0$.
\end{lemma}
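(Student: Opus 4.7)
For the first implication, given $f\in\Coe$ the integrand in $R_\lambda f(x)=\int_0^\infty e^{-\lambda t}U_t f(x)\,dt$ is bounded pointwise by $e^{-\lambda t}\|f\|$, which is integrable. If $x_n\to x$ in $E$ then $U_t f(x_n)\to U_t f(x)$ for every $t$ because $U_t f\in\Coe$ is continuous, and dominated convergence transfers the pointwise convergence to $R_\lambda f(x_n)\to R_\lambda f(x)$. Applying the same argument to a sequence $x_n$ escaping every compact set, and using that each $U_t f$ vanishes at infinity, gives $R_\lambda f(x_n)\to 0$. Hence $R_\lambda f\in\Coe$ for every $\lambda>0$.

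For the converse, the plan is to combine the Post--Widder inversion formula~\eqref{inv_L} with lemma~\ref{lem_A_6} and then invoke density. The subtle point is that the inversion formula uses $R_{n\lambda}$ for every $n\in\N$, while the hypothesis only gives invariance of $\Coe$ under the single operator $R_\lambda$. My first step would therefore be to bootstrap the invariance to every resolvent parameter. The resolvent equation, iterated, yields the Neumann series
\begin{equation*}
    R_\mu = \sum_{n=0}^\infty (\lambda-\mu)^n R_\lambda^{n+1},
\end{equation*}
which converges in operator norm whenever $|\lambda-\mu|<\lambda$, since $\|R_\lambda\|\le\lambda^{-1}$. Every partial sum maps $\Coe$ into $\Coe$, and $\Coe$ is closed in $(B(E),\|\cdot\|)$, so $R_\mu\Coe\subset\Coe$ for $\mu\in(0,2\lambda)$; iterating from a new base point extends the invariance to all $\mu>0$.

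With this invariance in hand, fix $t\ge 0$ and $f\in R_\lambda\Coe\subset\Coe$. Each $R_{n\lambda}f$ lies in $\Coe$, and the series defining $U^\lambda_t f$ is majorized termwise by $(e^{n\lambda t}/n!)\|f\|$, hence converges absolutely in sup-norm. The partial sums therefore belong to $\Coe$, so $U^\lambda_t f\in\Coe$ for every $\lambda$. Lemma~\ref{lem_A_6} gives $U^\lambda_t f\to U_t f$ in $(B(E),\|\cdot\|)$, and closedness of $\Coe$ yields $U_t f\in\Coe$. For an arbitrary $g\in\Coe$, density of $R_\lambda\Coe$ supplies $f_n\to g$ in sup-norm with $U_t f_n\in\Coe$; the contractivity bound $\|U_t f_n-U_t g\|\le\|f_n-g\|$ propagates membership to $U_t g$.

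The main obstacle is the bootstrap step: one must be careful that the hypothesis is supplied for only one $\lambda$, whereas the inversion formula requires $R_{n\lambda}f\in\Coe$ for every $n$. Once this is handled via the Neumann-series argument above, the remainder of the proof is routine manipulation of an absolutely convergent series and a standard density/continuity argument.
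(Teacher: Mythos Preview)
Your argument is correct and follows the same route as the paper's proof: dominated convergence for the first implication, and the Post--Widder inversion~\eqref{inv_L} together with lemma~\ref{lem_A_6} and density for the second. You are in fact more careful than the paper on two points the paper glosses over: you explicitly check that $R_\lambda f$ vanishes at infinity (the paper only writes out the continuity step), and you supply the Neumann-series bootstrap from invariance under a single $R_\lambda$ to invariance under every $R_\mu$, which the paper compresses into the phrase ``for some and therefore for all $\lambda>0$''.
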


\begin{proof}
Assume that  $U_t\Coe \subset \Coe$ for all $t\ge 0$, let $f\in\Coe$, $x\in E$, and
suppose that $(x_n,\,n\in\N)$ is a sequence converging in $(E,d)$ to $x$. Then a
straightforward application of the dominated convergence theorem shows that for
every $\gl>0$, $R_\gl f(x_n)$ converges to $R_\gl f(x)$. Hence $R_\gl f\in \Coe$.

Now assume that that $R_\gl\Coe\subset \Coe$, for some and therefore for all
$\gl>0$, and that $R_\gl\Coe$ is dense in $\Coe$. Consider $f\in R\Coe$, $t>0$, and
for $\gl>0$ define $U^\gl_t f$ as in equation~\eqref{inv_L}. Because
$R_{n\gl}f\in\Coe$ and the series in formula~\eqref{inv_L} converges uniformly in
$x\in E$, we get $U^\gl_t f\in\Coe$. By lemma~\ref{lem_A_6}, we find that $U^\gl_t
f$ converges uniformly to $U_t f$ as $\gl\to+\infty$. Hence $U_t f\in\Coe$. Since
$R\Coe$ is dense in $\Coe$, $U_t$ is a contraction and $\Coe$ is closed, we get that
$U_t\Coe\subset\Coe$ for every $t\ge 0$.
\end{proof}

The following lemma is proved as a part of Theorem~17.4 in~\cite{Ka97}
(cf.\ also the proof of Proposition~2.4 in~\cite{ReYo91}).

\begin{lemma}   \label{lem_A_8}
Assume that $R\Coe\subset \Coe$, and that for all $x\in E$, $f\in\Coe$,
$\lim_{\gl\to\infty} \gl R_\gl f(x) = f(x)$. Then $R\Coe$ is dense in $\Coe$.
\end{lemma}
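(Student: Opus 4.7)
The plan is to argue by contradiction, using the Hahn--Banach theorem together with the Riesz representation of the topological dual of $\Coe$. Suppose $R\Coe$ were not dense in $\Coe$. Then by Hahn--Banach there would exist a nonzero bounded linear functional $\Phi$ on $\Coe$ that vanishes on $R\Coe$. Since $(E,d)$ is a locally compact separable metric space, the Riesz--Markov--Kakutani representation theorem identifies the continuous dual of $(\Coe,\|\cdot\|)$ with the space of finite signed regular Borel measures on $E$; accordingly there exists a finite signed Borel measure $\mu\ne 0$ such that $\Phi(g)=\int_E g\,d\mu$ for every $g\in\Coe$. The vanishing of $\Phi$ on $R\Coe$ then reads
\begin{equation*}
    \int_E R_\gl f\, d\mu = 0,\qquad f\in\Coe,\ \gl>0.
\end{equation*}

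Next, I would multiply by $\gl$ and let $\gl\to\infty$. The contraction bound from definition~\ref{def_A_2}(i) gives
\begin{equation*}
    \bigl|\gl R_\gl f(x)\bigr| \le \|\gl R_\gl f\| \le \|f\|
\end{equation*}
uniformly in $x\in E$ and $\gl>0$, so the integrand is dominated by the constant $\|f\|$, which is $|\mu|$--integrable since $|\mu|(E)<\infty$. Together with the pointwise convergence $\gl R_\gl f(x)\to f(x)$ supplied by the hypothesis, the dominated convergence theorem yields
\begin{equation*}
    \int_E f\, d\mu = \lim_{\gl\to\infty}\int_E \gl R_\gl f\, d\mu = 0
\end{equation*}
for every $f\in\Coe$. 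Because integration against $\Coe$ determines a finite signed regular Borel measure on a locally compact Hausdorff space uniquely, this forces $\mu=0$, contradicting the choice of $\Phi$. Hence $R\Coe$ must be dense in $\Coe$.

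The heart of the argument is the dominated-convergence step, and it works cleanly thanks to the contraction bound $\|\gl R_\gl\|\le 1$; the remaining ingredients (Hahn--Banach, Riesz representation, and the fact that $\Coe$ is a separating family of continuous functions) are entirely standard for locally compact separable metric spaces, so I do not anticipate any real obstacle in this proof.
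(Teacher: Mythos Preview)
Your proof is correct. The paper does not supply its own proof of this lemma but refers to Theorem~17.4 in Kallenberg and the proof of Proposition~III.2.4 in Revuz--Yor; the argument there is precisely the Hahn--Banach/Riesz representation/dominated convergence argument you have written.
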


If for all $f\in\Coe$, $x\in E$, $U_t f(x)$ converges to $f(x)$ as $t$ decreases to
zero, then similarly as in the proof of lemma~\ref{lem_A_4} we get that $\gl R_\gl
f(x)$ converges to $f(x)$ as $\gl\to+\infty$. Thus we obtain the following

\begin{corollary}   \label{cor_A_9}
Assume that $R\Coe\subset \Coe$, and that for all $x\in E$, $f\in\Coe$,
$\lim_{t\downarrow 0} U_t f(x) = f(x)$. Then $R\Coe$ is dense in $\Coe$.
\end{corollary}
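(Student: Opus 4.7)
The plan is to reduce Corollary~\ref{cor_A_9} directly to Lemma~\ref{lem_A_8} by converting the given pointwise semigroup convergence at $t\downarrow 0$ into pointwise resolvent convergence as $\gl\to\infty$. This is exactly the pointwise analogue of the norm-convergence argument already carried out for Lemma~\ref{lem_A_4}.

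First I would perform the change of variables $s=\gl t$ in the Laplace transform defining $R_\gl$, writing
\begin{equation*}
    \gl R_\gl f(x) = \gl\int_0^\infty e^{-\gl t}\,U_t f(x)\,dt
                   = \int_0^\infty e^{-s}\,U_{s/\gl} f(x)\,ds.
\end{equation*}
The uniform bound $|U_{s/\gl} f(x)|\le \|f\|$ supplies the integrable majorant $s\mapsto e^{-s}\|f\|$, so the hypothesis $U_t f(x)\to f(x)$ as $t\downarrow 0$ lets the dominated convergence theorem be applied pointwise in $x$, giving
\begin{equation*}
    \lim_{\gl\to\infty} \gl R_\gl f(x)
        = \int_0^\infty e^{-s}\,f(x)\,ds = f(x)
\end{equation*}
for every $x\in E$ and every $f\in\Coe$.

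Once this pointwise resolvent convergence is in hand, the two hypotheses of Lemma~\ref{lem_A_8} are satisfied verbatim (using the assumed inclusion $R\Coe\subset\Coe$), and applying that lemma yields the desired density of $R\Coe$ in $\Coe$.

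There is no substantial obstacle here. The only detail to watch is that the hypothesis only provides pointwise, not uniform, convergence of $U_t f$ to $f$; this is why the dominated convergence theorem must be applied pointwise in $x$, yielding pointwise resolvent convergence that is then fed into Lemma~\ref{lem_A_8} in exactly the form stated.
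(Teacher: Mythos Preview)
Your proof is correct and follows precisely the route indicated in the paper: the text preceding the corollary explicitly states that one argues ``similarly as in the proof of lemma~\ref{lem_A_4}'' to obtain pointwise resolvent convergence, and then invokes lemma~\ref{lem_A_8}. Your explicit change of variables and pointwise application of dominated convergence is exactly this argument spelled out.
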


Now we can come to the

\begin{proof}[Proof of theorem~\ref{thm_A_3}]
We show first the equivalence of statements~(i), (ii), (iv), and (vi):

\vspace{.5\baselineskip}\noindent
``(i)\Ra(ii)'' Assume that $U$ is Feller. From lemma~\ref{lem_A_7} it follows that
$R_\gl\Coe\subset\Coe$, $\gl>0$. Let $f\in\Coe$. Since $U$ is strongly continuous on
$\Coe$, lemma~\ref{lem_A_4} implies that $\gl R_\gl f$ converges to $f$ as $\gl$ tends
to $+\infty$. Hence $R$ is Feller.

\vspace{.25\baselineskip}\noindent
``(ii)\Ra(vi)'' This is trivial.

\vspace{.25\baselineskip}\noindent
``(vi)\Ra(iv)'' By lemma~\ref{lem_A_8}, $R\Coe$ is dense in $\Coe$, and therefore
lemma~\ref{lem_A_7} entails that $U\Coe\subset\Coe$.

\vspace{.25\baselineskip}\noindent
``(iv)\Ra(i)'' By lemmas~\ref{lem_A_7} and \ref{lem_A_8},  $R\Coe$ is dense in
$\Coe$, and therefore by lemma~\ref{lem_A_5} $U$ is strongly continuous on $\Coe$. Thus
$U$ is Feller.

\vspace{.25\baselineskip}
Now we prove the equivalence of~(i), (iii), and (v):

\vspace{.25\baselineskip}\noindent
``(i)\Ra(iii)'' This is trivial.

\vspace{.25\baselineskip}\noindent
``(iii)\Ra(v)'' This follows directly from Lemma~\ref{lem_A_7}.

\vspace{.25\baselineskip}\noindent
``(v)\Ra(i)'' By corollary~\ref{cor_A_9}, $R\Coe$ is dense in $\Coe$, hence it follows
from lem\-ma~\ref{lem_A_7} that $U\Coe\subset\Coe$. Furthermore, lemma~\ref{lem_A_5}
implies the strong continuity of $U$ on $R\Coe$, and by density therefore on $\Coe$.
(i) follows.
\end{proof}

\end{appendix}
\providecommand{\bysame}{\leavevmode\hbox to3em{\hrulefill}\thinspace}
\providecommand{\MR}{\relax\ifhmode\unskip\space\fi MR }
% \MRhref is called by the amsart/book/proc definition of \MR.
\providecommand{\MRhref}[2]{%
  \href{http://www.ams.org/mathscinet-getitem?mr=#1}{#2}
}
\providecommand{\href}[2]{#2}

\end{document}